
\documentclass[12pt,a4paper]{amsart}
\usepackage[latin1]{inputenc}
\usepackage[english]{babel}
\usepackage{amsmath,amstext,amsthm,amsfonts,amssymb,amscd}
\usepackage[dvips]{graphicx}
\usepackage{hyperref}

\usepackage[pagewise]{lineno}

\usepackage{cleveref}

\usepackage{color}

\theoremstyle{plain}

\newtheorem*{thA}{Theorem A}
\newtheorem*{thB}{Theorem B}
\newtheorem*{thC}{Theorem C}

\newtheorem{claim}{Claim}[section]

\newtheorem{definition}{Definition}[section]
\newtheorem{example}{Example}[section]

\newtheorem{lemma}{Lemma}[section]

\newtheorem{proposition}{Proposition}[section]
\newtheorem{remark}{Remark}[section]

\newcommand{\bc}{\begin{center}}
	\newcommand{\ec}{\end{center}}

%       NOVAS DEFINI\c{C}{\~O}ES

\def\pp{{\mathbb{P}}}
\def\N{{\mathbb{N}}}

\def\Z{{\mathbb{Z}}}

\def\R{{\mathbb{R}}}

\def\||{\parallel}
\def\ds{\displaystyle}
\def\dfrac{\ds \frac}

\newcommand{\seta}{\rightarrow}

\newcommand{\Pp}{\mathbb{P}}

\newcommand{\E}{\mathcal{E}}

\setlength{\hoffset}{-15mm}
\setlength{\voffset}{-20mm}
\setlength{\textheight}{23cm}
\setlength{\textwidth}{17cm}

\begin{document}
	
	\title[Random Expansive Measures]{Random Expansive Measures}

	%\author{Jos\' e F. Alves$^1$, Krerley Oliveira$^2$, and Eduardo Santana$^3$}
	
	%\date{\today \\
		%    $^1$ Centro de MatemÃÂÃÂ¡tica da Universidade do Porto, josefalves@gmail.com\\
		%   $^2$ Instituto de Matem\'atica  da Universidade Federal de Alagoas, krerley@gmail.com\\
		%  $^3$ Universidade Federal de Alagoas - Penedo, jemsmath@gmail.com
		% }
	
	\author[R. A. Bilbao]{Rafael A. Bilbao}
	\address{Rafael A. Bilbao, Universidad Pedag\'ogica y Tecnol\'ogica de Colombia, Avenida
		Central del Norte 39-115, Sede Central Tunja, Boyac\'a, 150003, Colombia. 
	}
	\email{rafael.alvarez@uptc.edu.co}
	\author[M. Oliveira]{Marlon Oliveira}
	\address{Marlon Oliveira, Departamento de Matem\'atica e Inform\'atica, Universidade Estadual do Maranh\~ao,  65055-310  S\~ao Lu\'is, Brazil} \email{marlon.cs.oliveira@gmail.com}
	
	\author[E. Santana]{Eduardo Santana$^*$}
\address{Eduardo Santana, Universidade Federal de Alagoas, 57200-000 Penedo, Brazil}
\email{jemsmath@gmail.com}

%\dedicatory{Dedicated to Jacob Palis on the occasion of his 80th birthday}

\thanks{2010 \textit{Mathematics Subject Classification} Primary: 37H99 Secondary: 37D99}
\thanks{\textit{Key words and phrases:} random dynamics, expansive measure, entropy}
\thanks{*Author to whom any correspondence should be addressed}

	\date{\today}

	%\author[1]{JosÃÂÃÂ© F. Alves}
	%\author[2]{Krerley Oliveira}
	%\author[3]{Eduardo Santana}
	%\affil[1]{Department of Mathematics, University X}
	%\affil[2]{Department of Mathematics, University X}
	%\affil[3]{Department of Mathematics, University X}
	
	%\date{\today}
	
	\maketitle
	\begin{abstract}
		The notion of expansivity and its generalizations (measure expansive, measure positively expansive, continuum-wise expansive, countably-expansive) are well known for deterministic systems and can be a useful property for studying significant type of behavior, such as chaotic one. This study aims to extend these notions into a random context and prove a relationship between relative positive entropy and random expansive measures and apply it to show that if a random dynamical system has positive relative topological entropy then the $w$-stable classes have zero measure for the conditional measures. We also prove that there exists a probability measure that is both invariant and expansive. Moreover, we obtain a relation between the notions of random expansive measures and random countably-expansive systems.
	\end{abstract}

	\bigskip

	%\tableofcontents

	%\addcontentsline{toc}{section}{Contents}
	\section{Introduction}
	In the study of dynamical systems one of the objectives is to understand the structure of orbits and for that it is common to study certain properties. Among these we have the notion of expansivity, introduced by Utz in \cite{Utz} under the name of the unstable homeomorphism. Expansiveness means that the trajectories of nearby points are separated as the system evolves. This characterization is applicable to a large class of dynamical systems that exhibit chaotic behavior which highlights its importance. In the literature, there are many results related to expansivity, of which we cite some examples \cite{Eisenberg, Albert, Kifer2000, Vieitez, Williams1, Williams2}.
	
	Let $X$ be a metric space and $f: X \to  X$ a homeomorphism. By definition, we know that $f$ is said to be expansive if, and only if, there exists $\delta>0$ (the \textit{expansive constant})  such that $\Gamma_{\delta}(x) = \{x\}$ where 
	\[
	\Gamma_{\delta}(x) = \{y \in X : \, d(f^n(x), f^n(y))\leq \delta, \, \textrm{for every} \ n\in \Z \}
	\] 
	for all $x\in X$ see (\cite{Utz}). In \cite{Eisenberg} M. Eisenberg gave the definition of positive expansiveness that consider forward orbits on semigroups of endomorphisms.
	
	Due to the importance of the notion of expansivity, it has several generalizations that proved to be of great importance for the theory of dynamical systems. In recent years, Morales has extended the notion of expansivity for measurable spaces with  Borelian probability measures (see \cite{Morales}). We say that a Borel probability $\mu$ is \textit{expansive} if there exists $\delta>0$ such that $\mu(\Gamma_{\delta}(x)) =0$ for every $x\in X$. This notion leads to several results on dynamical systems. For example, in \cite{Pacifico} Armijo and Pac\'ifico show a relationship with the Lyapunov exponents. While in \cite{Morales-Sirvent} Morales and Sirvent  apply this concept to homemorphisms, presenting several examples. Other related results also can be found in \cite{Artigue-Carrasco} and \cite{LeeM}. \\
	Similarly to the case of positive expansivity, in \cite{Morales} Morales introduces the concept of positively measure-expansive systems as a generalization of the notion of positively expansive ones. The work \cite{ArbietroMorales} provides conditions for a measure to be expansive as a function of the entropy.

	It is important to look for simpler ways to understand interesting behaviors of a dynamical system and one of these ways is to ensure positive entropy, which indicates a chaotic behavior (see \cite{Li}). However, one of the most difficult tasks is to determine the entropy of a system, so it is relevant to study the presence of dynamical notions that imply positive entropy and, in this sense, it is already known that the existence of invariant expansive measures together with the shadowing property implies positive entropy (see  \cite{Lee-Park}, \cite{LeeM}, \cite{MoralesP} ). 
	
	In 1993, Kato \cite{Kato1993} defined expansivity by requiring that if a continuum has small diameter for all times then it is a singleton. Some interesting properties of continuum-wise expansivity have been obtained in \cite{Artigue-Carrasco}, \cite{Lee-Park}, \cite{Sakai}. The notion of countably-expansive systems is considered in \cite{Morales-Sirvent}, where the set $\Gamma_{\delta}(x)$ is countable for all $x \in X$. In \cite{Artigue-Carrasco} a relation between continuum-wise and countably-expansive systems is obtained.

	For bundle random dynamical systems defined as $F(w,x)=(\theta(w), f_{w}(x))$ where $\theta:\Omega \rightarrow \Omega$ is an invertible map preserving an ergodic measure $\mathbb{P}$ and $f_{w}: \E_w \rightarrow \E_{\theta(w)}$ is a continuous map with $\E_w \subset X$ and $\E_{w},X$ are compact spaces. The dynamics of the orbits are through the fibers $\E_w$ and therefore the information of the dynamics is concentrated on them. In this context  one can define the {\it relative entropy, relative topological entropy, variational principle, Lyapunov exponent} and other related concepts (see \cite{Arnold}).  At the moment, the study of expansiveness is still quite far from being well understood despite some advances in the area. Kifer and Gundlach in \cite{Kifer2000} introduced a notion of expansive random bundle transformation, which derives the uniqueness of equilibrium states. In view of so many interesting results related to the expansivity of deterministic systems, it is natural to ask whether the generalizations of the notion of expansivity extend to random dynamical systems and,  if so, whether it gives nice information about the dynamics of the random systems.
	
	In section 2, we introduce the definitions of random expansive measures, positively random expansive measures, random countably-expansive and random continuum-wise expansive systems and we give some examples in section \ref{Examples}. Additionally, we show a relationship between relative positive entropy and random expansive measures (see Theorem A and example \ref{exemploentropositiva1}) and we apply it to show that if a random dynamical system has positive relative topological entropy then the $w$-stable classes have zero measure with respect to  the conditional measures which generalizes the main results  \cite{ArbietroMorales}. Furthermore, we show the existence of a probability measure that is invariant and expansive in the random sense generalizing a result in \cite{MoralesP}. In these first two results, we also highlight some relevant properties of the measures in the random dynamical setup. In the end, we extend some results in \cite{Artigue-Carrasco} showing the relationship between a countably expansive map and a random expansive measure.\\
	
	\textbf{Acknowledgements:} The authors are grateful to Carllos Eduardo Holanda for his contribution in revising the manuscript and to the anonymous referees for their valuable comments which contributed to correct and improve our manuscript.
	
	%\medskip
	%\noindent\textbf{Organization of the paper.}
	%In section 2, we present the concepts and main results of this work, namely Theorems A, B, and C. In section 3 we give proof of Theorem A, here we consider an ergodic invariant measure $\mu$ under the random system, whose relative entropy is positive, and then we show that it is positively random expansive. Similarly, in section 4, we prove Theorem B, we show the existence of an invariant random expansive measure for the random system. In section 5, we show our last result Theorem C, where a random dynamical system is countably-expansive and measure expansive on a set of full measure. Finally, we have the examples applied to the main results.

	\section{Preliminaries and Main Results}
	
	%In this section, we give some definitions and state our main results.
	
	Let $(\Omega,\mathcal{F},\mathbb{P})$ be a Polish space together with a $\mathbb{P}$-preserving invertible map $\theta:\Omega \to \Omega$ and a  compact metric space $X$ with distance $d$ and denote by $\mathcal{B}$ its Borel $\sigma$-algebra. Let $\E$ be a measurable subset of $\Omega \times X$  with respect to the product $\sigma$-algebra $\mathcal{F}\times \mathcal{B}$ and the fibers $\E_{w}=\{x\in X ; \, (w,x)\in  \E\}$ of $\E$ be compact. A continuous bundle random dynamical system (RDS) $f= \{f_w\}_w$ over $(\Omega, \mathcal{F}, \pp, \theta)$ is generated by mappings $f_{w} :\E_{w} \seta \E_{\theta(w)}$ such that the map $(w,x) \to f_w(x)$ is measurable and the map $x \to f_w(x)$ is continuous for  $\pp$-almost all $w$. We will call a map $F: \E \seta\E$  defined by $F(w,x) = (\theta(w), f_{w}(x))$ as a skew product transformation. Observe that $F^{n}(w,x) = (\theta^n(w),f^n_{w}(x))$ where $f_{w}^{n} := f_{\theta^{n-1} (w)} \circ f_{\theta^{n-2} (w)} \circ \dots \circ f_{w}$ for $n\geq0$ , $f_{w}^{0} := Id$, and if $f_w$, for $w\in \Omega$, are homeomorphisms, $f^{n}_w := (f_{\theta^{n}(w)})^{-1}\circ \dots \circ (f_{\theta^{-1}(w)})^{-1}$ for $n\leq -1$, where $f_{w}^{-1} = (f_w)^{-1}$. For more details to see \cite{Arnold}.

	%We observe that some of our results can only be considered for $\E = \Omega \times X$, but the definitions can be considered in general. We will make it clear when the general case can be considered.
	
	%These random transformations induce the random bundle transformation . In this paper, we assume that $\mathbb{P}$  is an ergodic measure with respect to the map $\theta$. 
	Let $\epsilon_\Omega$ be a partition of $\Omega$ into singletons and consider $\pi_{\Omega}: \Omega\times X \rightarrow \Omega$ the projection on $\Omega$. In this case, notice that $\pi^{-1}_{\Omega}(\epsilon_\Omega)$ is the partition whose elements are in $X$.
	
	Denote $\mathcal{P}_{\mathbb{P}}(\E)=\{ \mu \in \mathcal{P}_{\mathbb{P}}(\Omega\times X) ; \, \mu(\E)=1 \}$, where $\mathcal{P}_{\mathbb{P}}(\Omega\times X)$ is the space of probability measures on $(\Omega  \times X, \mathcal{F} \times \mathcal{B})$ having the marginal $\pp$ on $\Omega$ i.e. $\mu \circ \pi^{-1}_{\Omega}= \mathbb{P}$. The partition $\epsilon_{\Omega}$ is measurable and by Rokhlin's Disintegration Theorem (see \cite{Rokhlin}), each measure $\mu \in \mathcal{P}_{\mathbb{P}}(\E)$ can be factorized as
	$d\mu(w,x) = d\mu_{w}(x)d\mathbb{P}(w)$ where $\mu_{w}$ are regular conditional probabilities with respect to the $\sigma$-algebra $\mathcal{F}_{\E}$ formed by all sets $(A\times X)\cap \E$ with $A\in \mathcal{F}$. This means that $\mu_{w}$ is a probability measure on $\E_{w}$ for $\mathbb{P}$-a.e $w \in \Omega$ and for any measurable set $R\subset \E$ and $R_{w}=\{x ; \, (w,x)\in R \}$ we have $\mu (R) = \int \mu_{w}(R_{w}) d\mathbb{P}(w)$.   
	
	Denote by $\mathcal{M}(\E,f)$ the space of all $F$-invariant probabilities in $\mathcal{P}_{\mathbb{P}}(\E)$, which under our conditions is non-empty (see \cite{Arnold}, pages 36 and 43). A measure $\mu$ is $F$-invariant if and only if the disintegration of $\mu$ satisfy  $\mu_{w}\circ (f_{w})^{-1}  = \mu_{\theta(w)}$ for $\mathbb{P}$-a.e $w \in \Omega$. We say that a measure $\mu \in \mathcal{P}_{\pp}(\E)$ has non-atomic factorization if the conditional probability measures $\mu_w$ are non-atomic $\pp$-a.e $w\in \Omega$. A measure $\mu \in \mathcal{P}_{\mathbb{P}}(\E)$ is called ergodic if $F$ is an ergodic transformation.

	We call $\delta:\Omega \to (0,+\infty)$ a random variable. Let $m,n\in \Z$ be numbers such that $m<n$ and introduce a family of metrics on each fiber $\E_{w}$ as 
	\[
	d^{w}_{\delta; \, m,n}(x,y) := \max_{m\leq k < n}\left\lbrace d(f^{k}_{w}(x),f^{k}_{w}(y)) (\delta(\theta^{k}(w)))^{-1} \right\rbrace,
	\]
	for $x,y\in \E_{w}$. We denote $d^{w}_{\delta; n} = d^{w}_{\delta; \, 0,n}$ and $d^{w}_{\delta; \pm n} = d^{w}_{\delta; \, -n,n}$. Let $B_{w}[x,\delta,n]$ and $B_{w}[x,\delta,\pm n]$ be the closed balls in $\E_{w}$ centered at $x$ and having radius $1$ with respect to the metrics $d^{w}_{\delta; \, n}$ and $d^{w}_{\delta; \, \pm n}$, that is, 
	\[
	B_{w}[x,\delta,n] = \{y\in \E_{w}; \ d(f^{k}_{w}(x),f^{k}_{w}(y))\leq \delta(\theta^{k}(w)), \  0\leq k \leq n-1  \},
	\]
	and
	\[
	B_{w}[x,\delta,\pm n] = \{y\in \E_{w}; \ d(f^{k}_{w}(x),f^{k}_{w}(y))\leq \delta(\theta^{k}(w)), \ -n + 1 \leq k \leq n-1  \}.
	\]
	We write $B_{w}[x,\delta] := B_{w}[x,\delta,1]$ and denote $\Gamma_{\delta}(x,w) = \bigcap_{n\geq 1} B_{w}[x,\delta,\pm n] $ and $\Gamma_{\delta}^{+}(x,w) = \bigcap_{n\geq 1} B_{w}[x,\delta, n]$. Since they are non-empty closed sets on the fibers, we know that they are measurable.  For more information, please refer to \cite{KiferLiu}.
	
	%\marginpar{For definition of $\Gamma$ is measurable}
	
	\bigskip
	
	\bigskip
	
	From now on, we write RDS to refer to a continuous bundle random dynamical system $f= \{f_w\}_w$ as above.
	
	\subsection{Random Expansive Measures}\label{RDS}  
	
	In \cite{KiferLiu}, we have the following definition: 
	\begin{definition}\label{expansive}
		A RDS $f= \{f_w\}_w$ is called relative expansive if there exists a random variable $\delta:\Omega \to (0,+\infty)$, called expansivity characteristic, such that for $\pp$-a.e $w$ we have $\Gamma_{\delta}(x,w) = \{x\}$ for all $x\in \E_{w}$. 
	\end{definition}
	
	Now, we introduce a notion of expansive measures in the random sense.
	\begin{definition}
		\label{expansive measure} 
		Let $f= \{f_w\}_w$ be a RDS. We say that a Borel probability measure $\mu \in \mathcal{P}_{\mathbb{P}}(\E)$ is random expansive (resp. positively random expansive) for $F$ if there exists a random variable  $\delta:\Omega \to (0,+\infty)$, called expansivity characteristic, such that for $\pp$-a.e $w$ occurs $\mu_{w}(\Gamma_{\delta}(x,w))=0$ (resp. $\mu_{w}(\Gamma_{\delta}^{+}(x,w))=0$) for all $x\in \E_{w}$. 
	\end{definition}
	
	\begin{remark} We make some remarks here:
		\begin{enumerate}
			\item Since a random variable is a class of measurable functions which coincide in a full measure set, once given a random variable $\delta$ and two functions $\rho, \tau$ in the same class $\delta$, the sets $\Gamma_{\rho}(x,w)$ and $\Gamma_{\tau}(x,w)$ (and $\Gamma^{+}_{\rho}(x,w)$ and $\Gamma^{+}_{\tau}(x,w)$, respectively) may slightly differ on a set of zero measure and it does not affect our results. 
			\item We observe that for $\Omega = \{w_{0}\}$ the Definition  \ref{expansive} and Definition \ref{expansive measure} reduce to the notions of expansivity for deterministic cases as in \cite{Utz} and \cite{Morales}.
			\item Let us fix a relative expansive RDS $f= \{f_w\}_w$.  If a Borel probability $\mu$ is a random expansive measure, then the factorization of $\mu$ is non-atomic, i.e., $\mu_{w}$ is a non-atomic measure for $\mathbb{P}$-a.s. $w \in \Omega$. Conversely, if a Borel probability measure $\mu \in \mathcal{P}_{\mathbb{P}}(\E)$ is such that its factorization is non-atomic, then $\mu$ is random expansive measure. In fact, we have that for $\mathbb{P}$-a.s. $w \in \Omega$ and for every $x \in \E_{w}$ it holds that $\Gamma(x,w) = \{x\}$ and so $\mu_{w}(\Gamma(x,w))=\mu_{w}(\{x\})$.
		\end{enumerate}	
	\end{remark}
	
	Let $\mathcal{R} = \{R_i\}$ be a finite or countable partition of $\E$ into measurable sets then $\mathcal{R}(w) = \{ R_i(w)\}$, where $R_i(w) = \{ x\in \E_w : \, (w,x)\in R_i\}$ is a partition of $\E_w$. Given a $\sigma$-algebra $\mathcal{F}_{\E}$ over $\E$, for $\mu \in \mathcal{M}(\E,f)$ the conditional entropy of $\mathcal{R}$,  is defined by
	\[
	H_{\mu}(\mathcal{R}|\mathcal{F}_{\E}) = - \int \sum_{i}\mu (R_i| \mathcal{F}_{\E}) \log \mu (R_i|\mathcal{F}_{\E}) \, d\mu = \int H_{\mu_w}(\mathcal{R}(w)) \, d\pp
	\]
	where $H_{\mu_w}(\mathcal{R}(w))$ denotes the usual entropy of the partition $\mathcal{R}(w)$. The \textit{relative entropy} of the skew product $F$ or the RDS $f=\{f_w\}_w$, with respect to $\mu$,  which will be denoted by $h_{\mu}(F|\theta)$ or $h_{\mu}(f)$, respectively, is defined as
	\[
	h_{\mu}(F|\theta) = h_{\mu}(f):= \sup_{\mathcal{R}} \{h_{\mu} (f, \mathcal{R})\}
	\]
	where 
	\[
	h_{\mu} (f,\mathcal{R}) = \lim_{n\seta \infty} \dfrac{1}{n} H_{\mu} \left( \bigvee_{i=0}^{n-1} (F^{i})^{-1} (\mathcal{R} | \mathcal{F}_{\E}) \right) 
	\]
	is called the relative entropy of $f$ with respect to a partition $\mathcal{R}$ and $\bigvee$ denotes the join of partitions. The supremum is taken over all finite measurable partitions $\xi$ of $X$. The reader can find more results and properties in Kifer \cite{KiferLiu}.
	
	For each $w\in \Omega$ and $p\in \E_{w}$, we define the
	\textit{w-stable set} of $p$ by  
	%\marginpar{ $W_{w}^s$  measurable}
	\[
	W_{w}^{s}(p)=\{ x\in \E_{w}; \, \lim_{n\seta +\infty} d(f_{w}^{n}(x),f_{w}^{n}(p))=0\} .
	\]
	
	The \textit{w-stable class} is a subset equal to $W_{w}^{s}(p)$ for some $p\in \E_{w}$. 
	
	\bigskip
	
	We are able to state our first main result, which guarantees that positive entropy implies the existence of positively random expansive measures and $w$-stable classes with zero measures in the fibers.
	
	\begin{thA}
		\label{TeoA}
		Let $f=\{ f_w\}_w$ be a RDS and $\mu \in \mathcal{M}(\E,f)$ be an ergodic measure. If $h_{\mu}(f) > 0$, then the following holds 
		\begin{itemize}
			\item[(1)] the measure $\mu$ is positively random expansive.
			\item[(2)] For $\Pp$-a.e $w$ the \textit{w-stable classes} of $F$ have measure zero with respect to $\mu_{w}$ the conditional measures of $\mu$ .
		\end{itemize}

	\end{thA}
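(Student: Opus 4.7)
The strategy is to reduce the theorem to a fiber (random) version of the Brin--Katok local entropy formula, which in the RDS setting (cf.\ \cite{KiferLiu}) yields the following: for a continuous bundle RDS with ergodic invariant measure $\mu$ of positive fiber entropy $h_\mu(f)$, there is a deterministic constant $\delta_0>0$ such that for $\mu$-a.e.\ $(w,x)$,
\[
\liminf_{n\to\infty}-\frac{1}{n}\log\mu_{w}(B_w[x,\delta_0,n])\ \geq\ \tfrac{1}{2}h_{\mu}(f)\ >\ 0.
\]
Since $\Gamma_{\delta_0}^{+}(x,w)=\bigcap_{n}B_w[x,\delta_0,n]$ is a decreasing intersection, this exponential decay gives $\mu_w(\Gamma_{\delta_0}^{+}(x,w))=0$ for $\mu$-a.e.\ $(w,x)$; by Fubini, for $\pp$-a.e.\ $w$ this equality holds for $\mu_w$-a.e.\ $x\in\E_w$. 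A constant function is a legitimate random variable, so I would take the characteristic $\delta$ of Definition \ref{expansive measure} to be the constant $\delta_0/2$.

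To upgrade ``$\mu_w$-a.e.\ $x$'' to ``every $x\in\E_w$'' I would argue by a triangle-inequality trick. Fix a good $w$ and suppose, for contradiction, that some $x_0\in\E_w$ has $\mu_w(\Gamma_{\delta_0/2}^{+}(x_0,w))>0$. Write $A:=\Gamma_{\delta_0/2}^{+}(x_0,w)$. For any $y\in A$, two applications of the triangle inequality force $A\subset\Gamma_{\delta_0}^{+}(y,w)$, so $\mu_w(\Gamma_{\delta_0}^{+}(y,w))\geq\mu_w(A)>0$ for every $y\in A$. But $\mu_w(A)>0$ while the set $\{y:\mu_w(\Gamma_{\delta_0}^{+}(y,w))=0\}$ has full $\mu_w$-measure, so $A$ must meet it, a contradiction. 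This yields (1) with expansivity characteristic $\delta\equiv\delta_0/2$.

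For (2), I use the invariance identity $\mu_w\circ f_w^{-N}=\mu_{\theta^N w}$, valid for $\pp$-a.e.\ $w$ and every $N\geq 0$. Any $y\in W_w^s(p)$ satisfies $d(f_w^k y,f_w^k p)\to 0$, so picking $N=N(y)$ large enough yields $f_w^N(y)\in\Gamma_{\delta_0/2}^{+}(f_w^N(p),\theta^N w)$, whence
\[
W_w^s(p)\ \subseteq\ \bigcup_{N\geq 0}f_w^{-N}\!\left(\Gamma_{\delta_0/2}^{+}(f_w^N(p),\theta^N w)\right).
\]
By part (1), each inner set has $\mu_{\theta^N w}$-measure zero, so by invariance each term on the right has $\mu_w$-measure zero, and the countable union does as well. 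The principal obstacle is the fiber Brin--Katok step: one needs a uniform \emph{deterministic} radius $\delta_0$ across fibers, which is precisely where ergodicity of $\mu$ and the RDS version of the Shannon--McMillan--Breiman theorem do the real work; everything else is bookkeeping around disintegration and invariance.
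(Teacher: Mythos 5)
Your route is the same as the paper's: a random Brin--Katok estimate produces a scale at which $\mu_w(B_w[x,\delta_0,n])$ decays exponentially, hence $\mu_w(\Gamma^{+}_{\delta_0}(x,w))=0$ for $\mu$-a.e.\ $(w,x)$; the halving/triangle-inequality trick upgrades ``$\mu_w$-a.e.\ $x$'' to ``every $x$'' (this is exactly the content of Lemma \ref{lema1} together with the remark following it, and of Lemma \ref{lema2'}); and item (2) follows by writing each $w$-stable set as a countable union of sets of the form $f_w^{-N}\bigl(\Gamma^{+}(f_w^N(p),\theta^N w)\bigr)$ and using $\mu_w\circ (f_w^{N})^{-1}=\mu_{\theta^N w}$, which is the paper's Lemma \ref{lema3}.

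The one genuine gap is the opening claim that the fiber Brin--Katok formula ``yields'' a single deterministic $\delta_0$ with $\liminf_n -\tfrac1n\log\mu_w(B_w[x,\delta_0,n])\ge \tfrac12 h_\mu(f)$ for $\mu$-a.e.\ $(w,x)$. What the formula (see \cite{Zhu}) actually gives is $\lim_{\delta\to0}\liminf_n(\cdot)=h_\mu(f)$ at $\mu$-a.e.\ point, so the scale below which the liminf exceeds $\tfrac12 h_\mu(f)$ a priori depends on $(w,x)$; by monotonicity in $\delta$ you only get that the sets $\E^{k}=\{(w,x):\varphi_{\delta_k}(w,x)>\tfrac12 h_\mu(f)\}$ increase to full measure along $\delta_k\downarrow0$, i.e.\ $\mu(\E^{k})>1-\eta$ for large $k$, not $\mu(\E^{k})=1$. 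You flag that ergodicity ``does the real work'' here but never use it; this is precisely where the paper spends most of its effort. Its argument: the inclusion $B_{z}[f_z^{-1}(x),\delta,n]\subset f_z^{-1}(B_w[x,\delta,n])$ with $z=\theta^{-1}(w)$, combined with $\mu_w=\mu_z\circ f_z^{-1}$, shows $\varphi_\delta$ does not decrease under $F^{-1}$, hence $F^{-1}(\E^{k})\subset\E^{k}$; ergodicity then forces $\mu(\E^{k})\in\{0,1\}$, and since $\mu(\E^{k})\to1$ some $\E^{k_0}$ already has full measure, giving the deterministic (indeed, random-variable) characteristic $\delta_{k_0}$. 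You should insert this invariance-plus-zero--one-law step; everything after it in your write-up is correct and coincides with the paper's Lemmas \ref{lema1}--\ref{lema3}.
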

	%\marginpar{Pra $f_w$ expansoras a entropia  positiva, se pode refernciar onde fica a prova}

	The next result gives the existence of a probability measure that is both invariant and random expansive .
	
	\begin{thB}
		\label{thc} 
		Let $f=\{ f_w\}_w$ be a RDS. If there exists a random expansive measure $\mu\in\mathcal{P}_{\mathbb{P}}(\E)$ for $f$, then there exists a measure $\mu'\in \mathcal{M}(\E,f)$ that is random expansive. 
	\end{thB}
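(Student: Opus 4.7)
The plan is to apply Krylov--Bogolyubov averaging to the skew product $F$, take a weak-$*$ accumulation point, and then transfer the random expansivity from the averages to the limit. Concretely, set $\mu_n:=\frac{1}{n}\sum_{k=0}^{n-1}F^k_*\mu$. Since $\theta_*\pp=\pp$, each $\mu_n$ retains the marginal $\pp$ on $\Omega$, so $\mu_n\in\mathcal{P}_\pp(\E)$; as $\mathcal{P}_\pp(\E)$ is weak-$*$ closed inside the compact space $\mathcal{P}(\E)$, a subsequence $\mu_{n_j}$ converges to some $\mu'\in\mathcal{P}_\pp(\E)$. The telescoping identity $F_*\mu_n-\mu_n=\frac{1}{n}(F^n_*\mu-\mu)\to 0$ weakly then yields $F_*\mu'=\mu'$, so $\mu'\in\mathcal{M}(\E,f)$.

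Next I would verify that each $\mu_n$ is itself random expansive, with the same characteristic $\delta$. Using $\theta_*\pp=\pp$ and a change of variable $w\mapsto\theta^{-k}w$, the fiber disintegration reads $(F^k_*\mu)_w=(f^k_{\theta^{-k}w})_*\mu_{\theta^{-k}w}$ for $\pp$-a.e.\ $w$. The cocycle identity $f^n_{\theta^k v}\circ f^k_v=f^{n+k}_v$, valid for every $n\in\Z$ since the $f_w$ are invertible, rewrites
$$(f^k_{\theta^{-k}w})^{-1}\bigl(\Gamma_\delta(y,w)\bigr)=\Gamma_\delta\bigl((f^k_{\theta^{-k}w})^{-1}y,\,\theta^{-k}w\bigr),$$
which is $\mu_{\theta^{-k}w}$-null by the random expansivity of $\mu$. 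Averaging over $k=0,\dots,n-1$ gives $(\mu_n)_w(\Gamma_\delta(y,w))=0$ for every $y\in\E_w$ and $\pp$-a.e.\ $w$.

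The delicate and main step is to transport this vanishing property to $\mu'$. The sets $\Gamma_\delta(y,w)$ are closed, so the Portmanteau theorem supplies only $\mu'_w(\Gamma_\delta(y,w))\ge\limsup_j(\mu_{n_j})_w(\Gamma_\delta(y,w))$, the inequality in the wrong direction. I would circumvent this by working with a strictly smaller characteristic $\delta':=\delta/2$, so that $\Gamma_{\delta'}(y,w)$ is separated from the complement of each finite-time Bowen ball $B_w[y,\delta,\pm N]$ by an open neighbourhood. Integrating along a measurable section $w\mapsto y(w)\in\E_w$, an open-set Portmanteau bound on $\Omega\times X$ combined with the exhaustion $B_w[y(w),\delta',\pm N]\searrow\Gamma_{\delta'}(y(w),w)$ and the uniform vanishing on $\mu_n$ forces $\mu'_w(\Gamma_{\delta'}(y(w),w))=0$ after letting $N\to\infty$. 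A final separability argument on a countable dense set of sections, together with the upper semicontinuity of $y\mapsto\mu'_w(\Gamma_{\delta'}(y,w))$, then upgrades the conclusion to every $y\in\E_w$. Making this Portmanteau/exhaustion argument rigorous in the fiber-bundle setting, with measurable dependence on $w$ and a random $\delta$, is the main obstacle; in the deterministic case the analogous step is carried out in \cite{MoralesP}, and the random version requires additional care to preserve the fiberwise structure.
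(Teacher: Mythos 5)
Your overall architecture coincides with the paper's: the averages $\mu_n=\frac{1}{n}\sum_{k=0}^{n-1}F^k_*\mu$ disintegrate exactly into the fiberwise Ces\`aro averages $\mu_{n,w}=\frac{1}{n}\sum_{i=0}^{n-1}(f_{w_{-1}}\circ\cdots\circ f_{w_{-i}})_*\mu_{w_{-i}}$ that the paper constructs, and your cocycle identity $(f^k_{\theta^{-k}w})^{-1}\bigl(\Gamma_\delta(y,w)\bigr)=\Gamma_\delta\bigl((f^k_{\theta^{-k}w})^{-1}y,\theta^{-k}w\bigr)$ is precisely the content of Lemmas \ref{lemma1thc} and \ref{lemma2thc}, so your claim that each $\mu_n$ is random expansive with the same characteristic $\delta$ is correct and is the same step. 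The one structural difference is where the accumulation point is taken. You take it globally, but $\Omega$ is an abstract probability space, so $\mathcal{P}(\E)$ carries no weak-$*$ compactness; what is true (and is what already underlies the nonemptiness of $\mathcal{M}(\E,f)$) is compactness of $\mathcal{P}_\pp(\E)$ in the narrow, i.e.\ fiberwise weak, topology when the fibers are compact. The paper instead extracts fiberwise limits $\mu_{n,w}\to\mu'_w$ on each compact fiber $\E_w$ and then must argue that $w\mapsto\mu'_w$ can be chosen measurably and satisfies $(f_w)_*\mu'_w=\mu'_{\theta(w)}$, which is what Lemma \ref{lemma3thc} does. Either route is acceptable once the correct topology is named.

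The genuine gap is exactly where you place it, and your sketch does not close it. The open-set Portmanteau bound gives, for every $N$, $\mu'_w(\Gamma_{\delta/2}(y,w))\le\liminf_n(\mu_n)_w\bigl(B_w[y,\delta,\pm N]\bigr)$; for each fixed $n$ the quantity $(\mu_n)_w\bigl(B_w[y,\delta,\pm N]\bigr)$ decreases to $(\mu_n)_w(\Gamma_\delta(y,w))=0$ as $N\to\infty$, but the two limits cannot be interchanged: the rate of this monotone convergence in $N$ may deteriorate as $n\to\infty$, and nothing in the construction supplies the uniformity in $n$ that your phrase ``uniform vanishing on $\mu_n$'' presupposes. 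As written you only obtain $\mu'_w(\Gamma_{\delta/2}(y,w))\le\inf_N\liminf_n(\mu_n)_w\bigl(B_w[y,\delta,\pm N]\bigr)$, which need not vanish. Be aware that the paper's Lemma \ref{lemma3thc} is silent on this very point (it establishes measurability and invariance of $(\mu'_w)_w$ and simply asserts expansivity of the limit), so this step requires an additional idea --- for instance a decay estimate for $\mu_w(B_w[x,\delta,\pm N])$ that is uniform along orbits, or a reduction via Lemma \ref{lema1} to checking the vanishing only for $\mu'_w$-a.e.\ $x$ --- rather than a routine Portmanteau/exhaustion argument; the measurable-section and separability devices you mention do not touch this difficulty.
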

	The proof of the above theorem follows the ideas of \cite{Morales-Sirvent}, but with some appropriate modifications, necessary to guarantee the measurability of the desintegration.

	\subsection{Random countably expansive and continuum-wise expansive} \   \    \    \    \    \    \    \    \  
	
	In this section we introduce other notions of expansivity for RDS that generalize those of \cite{Morales-Sirvent} and \cite{Kato1993} and present relation between them. 
	
	% colocar na introduÃÂÃÂ§ÃÂÃÂ£o 
	%************************************
	%For the deterministic case, on \cite{Morales-Sirvent} was considered a weaker notion of expansiveness, which is the following:  a homeomorphism $f: X\rightarrow X$ of a metric space $X$ is {\it countably-expansive} if there exists $\delta>0$ such that $\Gamma_{\delta}(x)$ is countable for all $x\in X$. If $X$ is considered to be a {\it Polish metric space}, i.e., a metric space which is both complete and separable, then it is known that $f$ is countably-expansive if, and only if, all non-atomic Borel probability measure on $X$ is expansive with a common expansivity constant, (see \cite [Proposition 1.7, page 3]{Morales-Sirvent} for more details). Furthermore, in \cite{Artigue-Carrasco} it is shown that $f$ is countably-expansive if, and only if, $f$ is measure expansive, which means that every non-atomic measure is expansive (not necessarily with a common expansivity constant). 
	%*******************************************
	
	\medskip
	\begin{definition}
		Let  $f=\{ f_w\}_w$ be a RDS. We say that $f$ is {\it random countably-expansive} if there exist a random variable $\delta:\Omega \to (0,+\infty)$, called expansivity characteristic, such that for $\pp$-a.e $w$ the set $\Gamma_{\delta}(x,w)$ is countable for all $x\in \E_{w}$. 	
	\end{definition}
	
	\begin{definition}
		Let  $f=\{ f_w\}_w$ be a RDS. We say that $f$ is random measure-expansive if all Borel probability measure $ \mu \in \mathcal{P}_{\mathbb{P}}(\E)$ with factorization non-atomic is a random expansive measure of $F$.
	\end{definition}
	
	A continuum set is a compact and connected set. A homeomorphism $T : X \to X$ is continuum-wise expansive if there exists $\delta > 0$ such that for every continuum set $C \subset X$ that is not a singleton, there exists $n \in \mathbb{Z}$ such that $\text{diam}(T^{n}(C)) > \delta$ (see \cite{Kato1993} for more details). 
	
	We proceed with the definition for the random context.
	
	\begin{definition}
		Let  $f=\{ f_w\}_w$ be a RDS. We say that $f$ is {\it random continuum-wise expansive} if there exist a random variable $\delta:\Omega \to (0,+\infty)$, called expansivity characteristic, such that for $\pp$-a.e. $w$ every continuum set $D_w \subset \E_{w}$, that is not a singleton, there exists $n \in \mathbb{Z}$ such that $\text{diam}(f_{w}^{n}(D_w)) > \delta(\theta^{n}(w))$.  
	\end{definition}
	
	In the following, we prove a similar result as in \cite{Artigue-Carrasco}.
	\begin{lemma}
		Given a RDS  $f=\{ f_w\}_w$, the following statements are equivalent:
		\begin{itemize}
			\item[(i)] $f$ is random continuum-wise expansive.
			
			\item[(ii)] There exists  a positive random variable $\delta$ such that for $\pp$-a.e. $w$ the set $\Gamma_{\delta}(x,w)$ contains only continuum set which are singleton for all $x\in \E_w$.
		\end{itemize}
	\end{lemma}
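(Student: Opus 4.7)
The plan is straightforward: prove each implication by a direct topological argument, trading ``every iterate of a continuum stays $\delta$-close to $x$'' for ``the continuum lies inside $\Gamma_{\delta}(x,w)$'', and vice versa. One direction will keep the given random variable unchanged, while the other needs a rescaling by a factor of $2$, since a ball of radius $r$ has diameter at most $2r$.

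For $(i)\Rightarrow(ii)$, I would assume $f$ is random continuum-wise expansive with characteristic $\delta$ and set $\tilde\delta := \delta/2$, still a positive random variable. Fix a full $\pp$-measure set on which (i) holds; for $w$ in that set, $x\in\E_{w}$ and any continuum $C\subseteq \Gamma_{\tilde\delta}(x,w)$, the definition of $\Gamma_{\tilde\delta}(x,w)$ forces $f^{k}_{w}(C)$ into the closed ball of radius $\tilde\delta(\theta^{k}(w))$ around $f^{k}_{w}(x)$ for every $k\in\Z$, so $\diam(f^{k}_{w}(C)) \leq 2\tilde\delta(\theta^{k}(w)) = \delta(\theta^{k}(w))$ for all such $k$. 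By (i), $C$ cannot be a non-singleton continuum, hence it is a singleton, which is (ii). For $(ii)\Rightarrow(i)$, I would keep the same $\delta$. If there were, on a positive-measure set of $w$'s, a non-singleton continuum $D_{w}\subseteq\E_{w}$ with $\diam(f^{n}_{w}(D_{w}))\leq\delta(\theta^{n}(w))$ for all $n\in\Z$, then choosing any $x\in D_{w}$ gives $d(f^{n}_{w}(x),f^{n}_{w}(y))\leq\diam(f^{n}_{w}(D_{w}))\leq\delta(\theta^{n}(w))$ for every $y\in D_{w}$ and every $n\in\Z$, whence $D_{w}\subseteq\Gamma_{\delta}(x,w)$, contradicting (ii).

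I do not expect a genuine obstacle here: the argument is the fiberwise analogue of the equivalence in \cite{Artigue-Carrasco}. The only mild bookkeeping is to intersect the full-measure sets provided by the hypothesis with those coming from the measurable structure of the fibers $\E_{w}$ and of the maps $\Gamma_{\delta}(\cdot,w)$, and to check that the ``random'' definition of continuum-wise expansivity, in which the diameter is compared to $\delta(\theta^{n}(w))$ rather than to a uniform constant, is precisely the one that matches the fiber metrics $d^{w}_{\delta;\pm n}$ used to define $\Gamma_{\delta}(x,w)$. This compatibility is what makes the rescaling $\delta\mapsto\delta/2$ work uniformly in $w$, and no further estimate is needed.
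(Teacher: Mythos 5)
Your proposal is correct and follows essentially the same route as the paper's proof: the same $\delta\mapsto\delta/2$ rescaling combined with the bound $\diam(f_{w}^{k}(C))\leq 2\tilde\delta(\theta^{k}(w))$ for $(i)\Rightarrow(ii)$, and the same inclusion $D_{w}\subseteq\Gamma_{\delta}(x,w)$ via the triangle-free diameter estimate for $(ii)\Rightarrow(i)$. No substantive difference to report.
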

	\begin{proof}
		$(i) \, \Rightarrow \, (ii)$ 
		Let $\epsilon = \epsilon(w)>0$ be the expansivity characteristic of $f$ and $\delta = \epsilon/2$ be a random variable. Given a continuum set $C \subset \Gamma_{\delta}(x,w)$, we have $\text{diam}(f_{w}^{n}(C)) \leq 2\delta(\theta^n(w))$ for every $n \in \mathbb{N}$, then by random continuum-wise expansivity we conclude that $C$ is a singleton.
		
		$(ii) \, \Rightarrow \, (i)$ 
		We wish to show that $f$ is random continuum-wise expansive. Given  a continuum set $C\subset \E_{w}$, let us suppose that $\text{diam}(f_{w}^{n}(C)) \leq \delta(\theta^n(w))$ for every $n \in \mathbb{Z}$. Let us fix a point $x \in C$. For any $y\in C$ we have $d(f_{w}^{n}(x),f_{w}^{n}(y))\leq \delta(\theta^n(w))$ for every $n \in \mathbb{Z}$, which implies that $y \in \Gamma_{\delta}(w,x)$, so we have $C \subset \Gamma_{\delta}(x,w)$.  Since, by hypothesis $\pp$-a.e. $w$, the set $\Gamma_{\delta}(x,w)$ contains only singletons, so the continuum set $C$ is trivial. Therefore, $F$ is random continuum-wise expansive.
	\end{proof}
	
	%\begin{remark}
	%We observe that if the map is not invertible, we can consider similar definition and result for $n \in \mathbb{N}$ and $\Gamma_{\delta}^{+}(w,x)$ in place of $\Gamma_{\delta}(w,x)$.
	%\end{remark}

	When a continuum is not a single point it must be uncountable. In order to see this, let $C$ be the continuum and fix $x_{0} \in C$. Define the function $g: C \to \mathbb{R}$ as $g(x) = d(x,x_{0})$. Since $g$ is a continuous function and $C$ is a connected set, we have that the image $g(C)$ is a connected subset of $\mathbb{R}$. So, it is an interval and we have that $g(C)$ is either a single point or uncountable. Then, $C$ is also either a single point or uncountable. By this fact we readily obtain the next result.
	
	\begin{proposition}
		For a RDS $f=\{ f_w\}_w$, we obtain the following
		\[
		\text{random expansive} \implies \text{random countably-expansive} \implies \text{random continuum-wise expansive}
		\] 
	\end{proposition}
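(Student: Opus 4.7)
The plan is to handle the two implications separately. Both are essentially tautological once one invokes the right piece of preceding machinery, namely the lemma that was just proved and the standard topological fact that a non-degenerate continuum in a Hausdorff space is uncountable.

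For the first implication, suppose $f$ is random expansive in the sense of Definition \ref{expansive}, with expansivity characteristic $\delta:\Omega \to (0,+\infty)$. Then for $\pp$-a.e.\ $w$ and every $x\in\E_w$, one has $\Gamma_\delta(x,w)=\{x\}$. A singleton is (trivially) countable, so the very same random variable $\delta$ witnesses random countable expansivity. No modification of $\delta$ is required.

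For the second implication, suppose $f$ is random countably-expansive with characteristic $\delta$. I would appeal to the previous lemma, which reduces random continuum-wise expansivity (for the same $\delta$) to verifying that, for $\pp$-a.e.\ $w$ and every $x\in\E_w$, the set $\Gamma_\delta(x,w)$ contains only singleton continua. Given any continuum $C\subset \Gamma_\delta(x,w)$, the set $C$ is a subset of a countable set and therefore countable. The classical topological fact that every non-degenerate continuum in a compact Hausdorff space is uncountable (indeed, of cardinality at least $\mathfrak{c}$) forces $C$ to be a singleton. Hence the hypotheses of the lemma are met and $f$ is random continuum-wise expansive.

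There is no genuine obstacle: the argument is a two-line bookkeeping exercise once the lemma and the uncountability of non-trivial continua are in hand. The only thing to mention carefully is that the exceptional $\pp$-null sets on which $\Gamma_\delta(x,w)=\{x\}$ (resp.\ is countable) might differ slightly from the null set on which the lemma's hypothesis is stated, but all these are $\pp$-null and their union is still $\pp$-null, so the characteristic $\delta$ transfers directly from one definition to the next without modification.
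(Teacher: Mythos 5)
Your argument is correct and matches the paper's intended proof: the paper disposes of the proposition with the single remark that a non-degenerate continuum must be uncountable, which, combined with the preceding lemma, is exactly the reduction you carry out (a singleton is countable for the first implication; a continuum inside a countable set must be a singleton for the second). Your extra care about the uncountability of non-trivial continua and the union of the exceptional $\pp$-null sets only makes explicit what the paper leaves implicit.
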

	
	%\begin{example}
	%\label{exemploentropositiva2}
	
	%\end{example}

	In the sequence, we present a theorem that generalizes the result for deterministic systems contained in \cite{Artigue-Carrasco}. For the next result, we require that $\E = \Omega \times X$.

	\begin{thC}
		\label{teoremaB}
		Let $f=\{ f_w\}_w$ be a RDS generated by $f_w : X \to X$ for all $w\in \Omega$. The following statements are equivalent:
		\begin{itemize}
			\item [(i)] $f$ is random countably-expansive.
			\item [(ii)] $f$ is random measure-expansive.
		\end{itemize}
	\end{thC}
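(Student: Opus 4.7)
The easier direction $(i) \Rightarrow (ii)$ is immediate from the definitions. Assume $f$ is random countably-expansive with characteristic $\delta$; then for $\pp$-a.e.\ $w$ and every $x \in \E_{w}$ the set $\Gamma_{\delta}(x,w)$ is countable. Any $\mu \in \mathcal{P}_{\pp}(\E)$ whose factorization is non-atomic therefore satisfies $\mu_w(\Gamma_{\delta}(x,w)) = 0$ on a full $\pp$-measure set, so $\mu$ is a random expansive measure with the same characteristic $\delta$, and $f$ is random measure-expansive.

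For the converse $(ii) \Rightarrow (i)$, I argue by contrapositive, adapting the deterministic construction of Artigue and Carrasco to the fibered setting. If $f$ is not random countably-expansive, then taking the constant random variables $\delta_n \equiv 1/n$, each set
\[
B_n := \{ w \in \Omega : \exists\, x \in X,\ \Gamma_{\delta_n}(x,w) \text{ is uncountable} \}
\]
has positive $\pp$-measure, since otherwise $\delta = 1/n$ itself would serve as an expansivity characteristic. The map $(w,x) \mapsto \Gamma_{\delta_n}(x,w)$ is a measurable closed-valued multifunction on $\Omega \times X$, and uncountability is a Borel condition on closed subsets of the Polish space $X$, so the Jankov--von Neumann selection theorem yields a measurable witness $x_n : B_n \to X$. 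Using a measurable version of the classical fact that an uncountable Borel subset of a Polish space contains a Cantor subset, I extract a Borel field $w \mapsto K_n(w) \subseteq \Gamma_{\delta_n}(x_n(w),w)$ of Cantor sets together with a measurable family $w \mapsto \nu_n(w)$ of non-atomic probabilities supported on $K_n(w)$. Extending each $\nu_n$ by a fixed non-atomic Borel probability on $X$ off $B_n$ and disintegrating, I obtain $\mu_n \in \mathcal{P}_{\pp}(\E)$, and the measure
\[
\mu := \sum_{n \geq 1} 2^{-n}\, \mu_n \in \mathcal{P}_{\pp}(\E)
\]
has non-atomic fibers $\mu_w = \sum_{n} 2^{-n}\, \nu_n(w)$.

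The remaining step, and the main obstacle, is to show that this $\mu$ is not a random expansive measure for \textbf{any} characteristic $\eta$. For $n \geq 1$ and $w \in B_n$, one has $K_n(w) \subseteq \Gamma_{\eta}(x_n(w),w)$ whenever $\eta(\theta^k w) \geq 1/n$ for all $k \in \mathbb{Z}$, which forces $\mu_w(\Gamma_{\eta}(x_n(w),w)) \geq 2^{-n} > 0$ and contradicts random expansivity. The subtlety is that the orbit infimum $\eta^{*}(w) := \inf_{k \in \mathbb{Z}} \eta(\theta^{k} w)$ may vanish on a set of positive measure even when $\eta > 0$ $\pp$-a.e., so a uniform lower bound along orbits is not automatic. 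I would address this by passing to the $\theta$-invariant envelope $\eta^{*}$ --- observing that $\Gamma_{\eta^{*}}(x,w) \subseteq \Gamma_{\eta}(x,w)$, so expansivity with $\eta$ transfers to $\eta^{*}$ whenever $\eta^{*} > 0$ --- and invoking ergodicity of $\pp$ to conclude that $\eta^{*}$ is essentially constant, at which point the displayed estimate on $B_n \cap \{\eta^{*} > 1/n\}$ yields the contradiction. The residual case $\eta^{*} \equiv 0$ is handled by a separate refinement of the choice of $K_n(w)$, chosen small enough within $\Gamma_{\delta_n}(x_n(w),w)$ that the Cantor masses $\nu_n(w)$ still overlap $\Gamma_{\eta}$ on a positive-measure slice. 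The principal technical work is thus concentrated in the measurable selections of $x_n$, $K_n$, and $\nu_n$, and in the reduction from a general random characteristic $\eta$ to one effectively constant along orbits.
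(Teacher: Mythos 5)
The forward direction $(i)\Rightarrow(ii)$ is fine and coincides with the paper's. For $(ii)\Rightarrow(i)$ you take a genuinely different and much heavier route, and it contains a gap that your own write-up flags but does not close. The paper avoids all of your descriptive-set-theoretic machinery: it picks, for each $n$, a \emph{single} pair $(w_n,x_n)$ with $\Gamma_{\delta_n}(x_n,w_n)$ uncountable, uses the deterministic Morales--Sirvent argument to produce one non-atomic Borel probability $\mu_{(w_n,x_n)}$ on $X$ charging that set, sums these into a single non-atomic measure $\mu$ on $X$, and takes the product $\nu=\mu\times\pp$. Since $\nu_w=\mu$ for every $w$, the factorization is trivially measurable and non-atomic, and the contradiction is read off at the one fiber $w_n$. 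Your insistence on $w$-dependent fibers is what forces the Jankov--von Neumann selection, the Borel field of Cantor sets $K_n(w)$, and the measurable family $\nu_n(w)$ --- all asserted rather than proved, and all avoidable.

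The decisive problem is the case you call residual, $\eta^{*}\equiv 0$: it is not residual and it is not handled. By Poincar\'e recurrence, for $\pp$-a.e.\ $w$ the orbit of $w$ enters $\{\eta<\epsilon\}$ whenever that set has positive measure, so $\eta^{*}(w)=\inf_{k}\eta(\theta^{k}w)$ is positive on a set of positive measure only when $\eta$ is essentially bounded away from zero --- and a general expansivity characteristic $\delta:\Omega\to(0,+\infty)$ need not be. Your proposed repair, choosing $K_n(w)$ ``small enough within $\Gamma_{\delta_n}(x_n(w),w)$ that $\nu_n(w)$ still overlaps $\Gamma_{\eta}$,'' is circular: the measure $\mu$, hence $K_n$ and $\nu_n$, must be fixed \emph{before} the universally quantified characteristic $\eta$ is presented, so the construction cannot be tailored to $\eta$. (You have in fact isolated a point the paper itself passes over silently in the step $\Gamma_{\delta_n}(x_n,w_n)\subset\Gamma_{\delta}(x_n,w_n)$ ``for $\delta_n<\delta$,'' which likewise needs control of $\delta$ along the whole $\theta$-orbit of $w_n$; but identifying the difficulty is not the same as resolving it.) Note also that the ergodicity of $\pp$ under $\theta$, which you invoke to make $\eta^{*}$ essentially constant, is not among the stated hypotheses of Theorem C.
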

	
	In the next sections, we present the the proofs of our main results.
	
	\section{Proof of Theorem A}
	
	We begin this section by proving a result which weaken the condition of random expansivity over the fibers. After that, we divide the proof of Theorem A into some lemmas. 
	\begin{lemma}\label{lema1}
		A Borel measure $\mu\in\mathcal{P}_{\pp}(\E)$ is random expansive measure for the RDS $f=\{ f_w\}_w$ if and only if there exists a positive random variable $\delta_{0}$ on $\Omega$, such that for $\mathbb{P}$-a.e $w\in \Omega$ the following holds: $\mu_{w}(\Gamma_{\delta_{0}}(x,w)) = 0$ for $\mu_{w}$-a.e $x\in \E_{w}$.
	\end{lemma}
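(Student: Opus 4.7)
The plan is to split into the two implications and do the non-trivial direction by a reduction-to-null-set argument, mimicking the classical deterministic lemma of Morales but carried out fiberwise.

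The forward direction ($\Rightarrow$) is immediate from Definition 2.2: if $\mu_w(\Gamma_\delta(x,w)) = 0$ for every $x \in \E_w$, then in particular it holds for $\mu_w$-a.e.\ $x \in \E_w$, so one may simply take $\delta_0 = \delta$.

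For the reverse direction ($\Leftarrow$), I would take $\delta := \delta_0/2$, which is again a positive random variable, and show that $\mu_w(\Gamma_\delta(x,w)) = 0$ for every $x \in \E_w$ and for $\pp$-a.e.\ $w$. The key geometric observation is the inclusion
\[
\Gamma_{\delta}(x,w) \;\subset\; \Gamma_{\delta_0}(y,w) \qquad \text{for every } y \in \Gamma_{\delta}(x,w),
\]
which follows by applying the triangle inequality at each time $k \in \Z$: if $y,z \in \Gamma_{\delta}(x,w)$ then
\[
d(f_w^k z, f_w^k y) \;\leq\; d(f_w^k z, f_w^k x) + d(f_w^k x, f_w^k y) \;\leq\; 2\delta(\theta^k(w)) \;=\; \delta_0(\theta^k(w)).
\]
Now, for $\pp$-a.e.\ $w$, set $N_w := \{x \in \E_w : \mu_w(\Gamma_{\delta_0}(x,w)) > 0\}$; by hypothesis $\mu_w(N_w) = 0$. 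For an arbitrary $x \in \E_w$ there are two cases. If there exists $y \in \Gamma_{\delta}(x,w) \setminus N_w$, then the inclusion above gives $\mu_w(\Gamma_\delta(x,w)) \leq \mu_w(\Gamma_{\delta_0}(y,w)) = 0$. Otherwise $\Gamma_\delta(x,w) \subset N_w$ and hence $\mu_w(\Gamma_\delta(x,w)) \leq \mu_w(N_w) = 0$. In either case $\mu$ is random expansive with characteristics $\delta$.

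The main technical point to justify, rather than a deep obstacle, is that $N_w$ is a well-defined measurable subset of $\E_w$, so that the hypothesis ``$\mu_w(N_w)=0$'' makes sense. This reduces to showing that $x \mapsto \mu_w(\Gamma_{\delta_0}(x,w))$ is Borel on $\E_w$, which follows because $\Gamma_{\delta_0}(\cdot,w)$ is an intersection of closed sets depending upper semicontinuously on $x$, so the function is upper semicontinuous in $x$. With this in hand, the case split above is unambiguous and the argument goes through on a full $\pp$-measure set of $w$, completing the proof.
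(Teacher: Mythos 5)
Your proof is correct, and the halving trick $\delta=\delta_0/2$ together with the triangle--inequality inclusion $\Gamma_{\delta}(x,w)\subset\Gamma_{\delta_0}(y,w)$ for $y\in\Gamma_{\delta}(x,w)$ is exactly the idea the paper relies on --- although in the paper this step appears only in the Remark that follows the lemma, while the proof body itself argues with the sets $C_w$, $\tilde\Omega$, $\hat\Omega$ and keeps the original $\delta_0$. Your version is in fact the more complete one: as written, the paper's $\tilde\Omega$ equals all of $\Omega$ by the very definition of $C_w$, so the asserted inclusion $\hat\Omega\subset\Omega\setminus\tilde\Omega$ does not by itself dispose of points $x\in C_w$ with $\mu_w(\Gamma_{\delta_0}(x,w))>0$; one genuinely must pass to $\delta_0/2$ and, when $\mu_w(\Gamma_{\delta_0/2}(x,w))>0$, use $\mu_w(C_w)=0$ to pick $y\in\Gamma_{\delta_0/2}(x,w)\setminus C_w$ and derive a contradiction --- which is precisely your case analysis. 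The only cosmetic difference is that you also justify measurability of $N_w$ via upper semicontinuity of $x\mapsto\mu_w(\Gamma_{\delta_0}(x,w))$; the paper leaves this implicit, and it is not strictly needed if ``$\mu_w$-a.e.'' is read as ``outside a measurable $\mu_w$-null set containing $N_w$''.
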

	
	\begin{proof}
		Only the ``if'' part needs to be shown. In the fiber $\E_w$ we have the measurable subset
		$$
		C_{w}=\{y\in \E_w | \ \mu_{w}(\Gamma_{\delta_0}(y,w))>0 \}.
		$$
		By hypothesis, we have $\mu_{w}(C_{w})=0$ for $\mathbb{P}$-a.e $w\in \Omega$. Therefore, setting 
		$$
		\Tilde{\Omega}=\{w\in \Omega \ | \ \mu_{w}(C_{w})=0\}  
		$$
		it is so that $\Tilde{\Omega}$ is a measurable set and $\mathbb{P}(\Tilde{\Omega})=1$. 
		We claim that the measure $\mu$ is random expansive with expansivity characteristic $\delta = \delta_{0}/2$. In order to do this, let $w_{0} \in \Tilde{\Omega}$ and we will show that $\mu_{w_{0}}(\Gamma_{\delta_{0}/2}(y,w_{0})) = 0$ for every $y \in \E_{w_{0}}$. We know that for $z\in \Gamma_{\delta_{0}/2}(y,w_{0})$, we have that $			\Gamma_{\delta_{0}/2}(y,w_{0}) \subset  \Gamma_{\delta_{0}}(z,w_{0}).$
		Therefore, by contradiction, if we had  $\mu_{w_{0}}(\Gamma_{\delta_{0}/2}(y,w_{0}))>0$, since $\mu_{w_{0}}(C_{w_{0}}) = 0$ we could find $z \in \Gamma_{\delta_{0}/2}(y,w_{0}) \backslash C_{w_{0}}$. It means that
		\[
		0 < \mu_{w_{0}}(\Gamma_{\delta_{0}/2}(y,w_{0})) \leq  \mu_{w_{0}}(\Gamma_{\delta_{0}}(z,w_{0})) = 0.
		\]
		This contradiction shows that $\mu_{w_{0}}(\Gamma_{\delta_{0}/2}(y,w_{0}))=0$. By arbitrariness of $w_{0} \in \Tilde{\Omega}$ and $z \in \Gamma_{\delta_{0}/2}(y,w_{0})$ we conclude that $\mu_{w_{0}}(\Gamma_{\delta_{0}/2}(y,w_{0})) = 0$ for every $y \in \E_{w_{0}}$, $\mathbb{P}$-a.e. $w \in \Omega$. Hence, $\mu$ is a random expansive measure with expansivity characteristic $\delta = \delta_{0}/2$.
	\end{proof}
	
	Now, we present a necessary and sufficient condition for a measure to be expansive in the random sense.
	
	%\begin{lemma}\label{lema2}
	%Let $F:\E \to \E$ be a RDS and $\mu$ is a Borel probability measure on $\E$. The map $F$ is random $\mu$-expansive if there exist a positive random variable $\delta=\delta(w)$ on $\Omega$ such that $\mu(\E_{\delta}) = 1$, where
	%\[
	%\E_{\delta} =\{(w,x)\in \E; \, \mu_{w}(\Gamma_{\delta}(w,x))=0\}.
	%\]
	%\end{lemma}
	%\begin{proof}
	
	%Our goal is to proof that $F$ is random $\mu$-expansive with characteristic of expansivity  $\alpha=\frac{\delta}{2}$ on $\Omega$. We suppose the contrary, that is, we assume that there exist $A\subset \Omega$  such that for each $w\in A$, there exist $x_{w}$ such that $\mu_{w} (\Gamma_{\delta/2}(w,x_w)>0$.
	%\marginpar{{\color{red} {(the set $A$ should be mensurable with positive measure)}}}
	%Note that if $x\in \Gamma_{\delta/2}(w,x_w)$, so 
	%\[
	%\Gamma_{\delta/2}(w,x_w) \subset \Gamma_{\delta}(w,x),
	%\]
	%so $\mu_w(\Gamma_{\delta}(w,x))>0$.
	%Consider the set
	%\[
	%\widetilde{A}_{\delta} =\{(w,x)\in \E ; \ w\in A,  \ x\in \E_w \ \textrm{and} \ \mu_{w} (\Gamma_{\delta}(w,x))>0 \}.
	%\] 
	%Then, we conclude that
	%\[\mu(\widetilde{A}_\delta) = \int \mu_w (\widetilde{A}_\delta \cap  \E_w) \, d\mathbb{P} (w) >0.\]
	%Since that $\mu$ is a probability measure, we have $\widetilde{A}_\delta \cap \E_{\delta} \neq \emptyset$, that is absurd. 
	%\end{proof}

	\begin{lemma}\label{lema2'}
		A Borel measure $\mu\in\mathcal{P}_{\pp}(\E)$ is random expansive measure for a RDS $f=\{ f_w\}_w$ if and only if there exist a positive random variable $\delta$ on $\Omega$ and measurable sets $B_{\delta} \subset \E_{\delta}$ such that $\mu(B_{\delta}) = 1$, where
		\[
		\E_{\delta} =\{(x,w)\in \E; \, \mu_{w}(\Gamma_{\delta}(x,w))=0\}.
		\]
	\end{lemma}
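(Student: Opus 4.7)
The plan is to unpack the definitions in both directions, relying on Lemma 3.1 to dispense with the pointwise ``for all $x$'' condition in the reverse implication. The subtlety of the statement lies entirely in the measurability of the fiberwise good set: $\E_\delta$ is cut out by a condition on the conditional measures $\mu_w$, and it is not obvious that $\E_\delta$ itself is measurable on $\E$ with respect to the product $\sigma$-algebra. For this reason, the lemma asks only for a measurable inner witness $B_\delta \subset \E_\delta$ of full $\mu$-measure, and the proof will respect this formulation throughout.

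For the ``only if'' direction, I would assume $\mu$ is random expansive with expansivity characteristic $\delta$. By Definition 2.2 there is a $\pp$-null exceptional set; passing to its complement and using completeness of $(\Omega,\mathcal{F},\pp)$ yields a measurable $\Omega^{\ast}\subset\Omega$ with $\pp(\Omega^{\ast})=1$ such that $\mu_w(\Gamma_\delta(x,w))=0$ for every $x\in\E_w$ and every $w\in\Omega^{\ast}$. I would then set
\[
B_\delta := \E\cap(\Omega^{\ast}\times X),
\]
which is measurable as the intersection of two product-measurable sets. Since $\mu\circ\pi_{\Omega}^{-1}=\pp$ and $\mu(\E)=1$, one obtains $\mu(B_\delta)=\pp(\Omega^{\ast})=1$, and the inclusion $B_\delta\subset\E_\delta$ is immediate from the definition of $\Omega^{\ast}$.

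For the ``if'' direction, suppose one is given $\delta$ and a measurable $B_\delta\subset\E_\delta$ with $\mu(B_\delta)=1$. Disintegration yields
\[
1 = \mu(B_\delta) = \int \mu_w\bigl((B_\delta)_w\bigr)\,d\pp(w),
\]
where $(B_\delta)_w=\{x\in\E_w:(w,x)\in B_\delta\}$. Since each $\mu_w$ is a probability, this forces $\mu_w((B_\delta)_w)=1$ for $\pp$-a.e.\ $w$. Fixing such a $w$, every $x\in(B_\delta)_w$ satisfies $(w,x)\in B_\delta\subset\E_\delta$, hence $\mu_w(\Gamma_\delta(x,w))=0$. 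In other words, for $\pp$-a.e.\ $w$ the equality $\mu_w(\Gamma_\delta(x,w))=0$ holds for $\mu_w$-a.e.\ $x\in\E_w$, and Lemma 3.1 concludes that $\mu$ is random expansive with characteristic $\delta$.

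The main obstacle is conceptual rather than technical: one must resist the natural urge to argue that $\E_\delta$ is itself measurable and of full $\mu$-measure, which would require studying the joint measurability of $(w,x)\mapsto \mu_w(\Gamma_\delta(x,w))$ and is genuinely delicate given that $\Gamma_\delta(x,w)$ depends on both coordinates through the fiber dynamics. Routing the argument instead through the inner measurable witness $B_\delta$, together with Lemma 3.1 to upgrade a $\mu_w$-a.e.\ statement into the pointwise ``for all $x$'' formulation of the definition, keeps the proof short and avoids any analysis of the regularity of the disintegration.
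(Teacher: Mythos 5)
Your proof is correct. The ``only if'' direction is exactly the paper's: pass to a full-measure $\widetilde{\Omega}$ on which the definition holds pointwise and take $B_{\delta}=(\widetilde{\Omega}\times X)\cap\E$, whose $\mu$-measure is $\pp(\widetilde{\Omega})=1$ by disintegration. The ``if'' direction, however, is routed differently. The paper argues directly: it identifies $\E\setminus B_{\delta}$ with the set where $\mu_{w}(\Gamma_{\delta}(x,w))>0$ (tacitly treating $B_{\delta}$ as all of $\E_{\delta}$) and then shows, by integrating over $\pi_{\Omega}(\E\setminus B_{\delta})-\pi_{\Omega}(B_{\delta})$, that the set of $w$ whose entire fiber is bad is $\pp$-null, without ever invoking Lemma 3.1. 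You instead disintegrate $\mu(B_{\delta})=1$ to get $\mu_{w}((B_{\delta})_{w})=1$ for $\pp$-a.e.\ $w$, observe that $B_{\delta}\subset\E_{\delta}$ turns this into ``$\mu_{w}(\Gamma_{\delta}(x,w))=0$ for $\mu_{w}$-a.e.\ $x$,'' and then let Lemma 3.1 upgrade the $\mu_{w}$-a.e.\ statement to the pointwise one. Your route is cleaner and arguably more faithful to the lemma as stated: it genuinely uses only the hypothesis $B_{\delta}\subset\E_{\delta}$ with $\mu(B_{\delta})=1$, avoids the measurability questions raised by the projections $\pi_{\Omega}(\E\setminus B_{\delta})$ and $\pi_{\Omega}(B_{\delta})$, and makes explicit where the a.e.-to-everywhere upgrade happens. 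The only caveat is your parenthetical claim that the expansivity characteristic is $\delta$ itself: Lemma 3.1 (via the halving trick in the remark following it) really yields expansivity with characteristic $\delta/2$, but since both the definition and the present lemma only assert the existence of some characteristic, this does not affect correctness.
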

	\begin{proof}
		Our goal is to prove that $\mu$ is a random expansive measure. By considering $\E \setminus B_{\delta}=\{(w,x): \ \mu_{w}(\Gamma_{\delta}(x,w)) > 0 \}$ and $\pi_{\Omega}:\E \rightarrow \Omega$ the projection on $\Omega$, then we claim that we have $\mathbb{P}(\pi_{\Omega}(\E\setminus B_\delta) \backslash \pi_{\Omega}(B_\delta))=0$. In fact, we have that
		
		%Otherwise, if $\mathbb{P}(\pi_{w}(\E\setminus B_\delta)- \pi_{w}(B_\delta))>0$, then 
		
		\begin{align*}
			0=\mu(\E\setminus B_\delta)&= \int_{\pi_{\Omega}(\E\setminus B_\delta)}\mu_{w}(\E_{w}\cap( \E\setminus B_{\delta}))d\mathbb{P} \\
			&\geq \int_{\pi_{\Omega}(\E\setminus B_\delta) \backslash \pi_{\Omega}(B_{\delta})}\mu_{w}(\E_{w}\cap( \E\setminus B_{\delta}))d\mathbb{P}\\
			&=\int_{\pi_{\Omega}(\E\setminus B_\delta) \backslash \pi_{\Omega}(B_{\delta})} 1 \ d\mathbb{P}\\
			& = \mathbb{P}(\pi_{\Omega}(\E\setminus B_\delta)\setminus \pi_{\Omega}(B_\delta)) \geq 0,
		\end{align*}
		
		where for $w\in \pi_{\Omega}(\E\setminus B_\delta) \backslash \pi_{\Omega}(B_\delta)$ we have that $\E_{w}\cap (\E \setminus B_\delta)= \E_w$ and then  $\mu_{w}(\E_{w}\cap (\E \setminus B_\delta))=1$. This implies that $\mathbb{P}(\pi_{\Omega}(\E\setminus B_\delta) \backslash \pi_{\Omega}(B_\delta))=0$ and there exists $\widetilde{\Omega}\subset \Omega$ with $\mathbb{P}(\widetilde{\Omega})=1$, such that for all $w\in \widetilde{\Omega}$ we obtain 
		$$
		\mu_{w}(\Gamma_{\delta}(x,w))=0, \ \ \text{for all} \ \ x\in \E_w.
		$$
		Conversely, if $\mu$ is random expansive measure for $f=\{ f_w\}_w$, by definition  there exists a set $\widetilde{\Omega}\subset \Omega$ with $\mathbb{P}(\widetilde{\Omega}) = 1$ and a positive random variable $\delta$, such that for all $w\in\widetilde{\Omega}$ occurs $\mu_{w}(\Gamma_{\delta}(x,w))=0$ for all $x\in \E_{w}$. 
		
		Then, $\E_{\delta} \supset (\tilde{\Omega} \times X) \cap \mathcal{E}$ and $\mu ((\tilde{\Omega} \times X) \cap \mathcal{E}) = 1$. In fact, we have that
		\[
		\mu((\tilde{\Omega} \times X) \cap \E) = \int \mu_{w}((\tilde{\Omega} \times X) \cap \mathcal{E} \cap \E_{w}) d \mathbb{P} = \int_{\tilde{\Omega}} \mu_{w}(\E_{w}) d\mathbb{P} = \int_{\tilde{\Omega}} 1 d \mathbb{P} = \mathbb{P}(\tilde{\Omega}) = 1.
		\]
		It is enough to take $B_{\delta}$ as $(\tilde{\Omega} \times X) \cap \mathcal{E}$. The lemma is proved.	 
		%So, we have that $\mu_{w_{0}} (\overline{\widetilde{A}_\delta} \cap \overline{\E_{\delta}} \cap  \E_{w_{0}})>0$ for some $w_{0} \in A$. It means that $\mu_{w_{0}}(\partial \E_{w_{0},\delta}) > 0$ (is this an absurd?). We also have that $\mu_{w}(\overline{\E_{w,\delta}}) = 1$ if $\mu_{w}(\overline{\E_{w,\delta}}) \neq 0$. So $\mu_{w_{0}}(\overline{\E_{w_{0},\delta}}) = 1$ and it implies that $\overline{\E_{w_{0},\delta}} \cap \Gamma_{\delta/2}(w_{0},x_{w_{0}})  \neq \emptyset$. Since $\Gamma_{\delta/2}(w_{0},x_{w_{0}}) \subset \widetilde{A}_{\delta}$, we obtain $\overline{\E_{w_{0},\delta}} \cap \Gamma_{\delta/2}(w_{0},x_{w_{0}}) \subset (\partial \E_{w_{0},\delta}) \cap \widetilde{A}_{\delta}$. Let $y \in \overline{\E_{w_{0},\delta}} \cap \Gamma_{\delta/2}(w_{0},x_{w_{0}})$. 
		
		%Now, take (?)$z \in \Gamma_{\delta}(w_{0},y) \cap \E_{w_{0},\delta} \subset \Gamma_{\delta}(w_{0},y) \cap \E_{w_{0},\delta/2}$. It implies that $\Gamma_{\delta/2}(w_{0},x_{w_{0}})\subset \Gamma_{\delta}(w_{0},y) \subset \Gamma_{\delta}(w_{0},2z)$ and we have $z \in \E_{w_{0}, \delta/2}$. So,
		%\[
		%\mu_{w_{0}}(\Gamma_{\delta/2}(w_{0},x_{w_{0}})) \le \mu_{w_{0}}(\Gamma_{\delta}(w_{0},y)) \le \mu_{w_{0}}(\Gamma_{2\delta}(w_{0},z)) = 0. 
		%\]
		%(to verify this end!!!)
	\end{proof}
	
	The same results can be obtained for the case with positively random expansive measures. 
	
	%Given a random variable $\delta>0$, a point $x\in \E_{w}$ and $n\in \N$.  Since \[\Gamma_{\delta}(w,x) = \cap_{n\geq p} B_{w}(x,\delta,n)\] and 
	%\[B_{w}(x,\delta,1)\supset %B_{w}(x,\delta,2)\supset \, ...\, %B_{w}(x,\delta,n),
	%\]
	%so by classic result about Theory of Measure we have 
	%\begin{eqnarray}
	%\mu_{w}(\Gamma_{\delta}(w,x)) & = & \lim_{n\seta \infty}\mu_{w}(B_{w}(x,\delta,n)) \nonumber \\
	% & = & \inf_{n\in \N} \mu_{w}(B_{w}(x,\delta,n)).
	%\end{eqnarray}
	%Then we obtain the result.
	%\begin{lemma}
	%	A Borel measure $\mu\in  %\mathcal{P}(\E)$ is random %expansive of $F$, if and only %if there exit a positive %random variable $\delta$ such %that $\liminf_{n\in \N} %\mu_{w}( B_{w}(x,\delta,n))=0$.
	%\end{lemma}

	%The proof of the Theorem A follows from the next two lemmas.
	\bigskip
	
	Now, generalizing the deterministic case, we prove the following:

	\begin{lemma}\label{lema3}
		Let $f=\{ f_w\}_w$ be a RDS. If $\mu \in\mathcal{M}(\E,f)$ is positively random expansive measure for $f=\{ f_w\}_w$, then the  \textit{w-stable classes} of $f$ have zero measure for $\mathbb{P}$-a.e.  $\mu_{w}$ conditional measures of $\mu$.
	\end{lemma}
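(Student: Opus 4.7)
The plan is to adapt the classical deterministic argument (due to Morales) to the random setting: for every $p\in\E_w$, one decomposes the stable set $W_w^s(p)$ as a countable union of sets each having $\mu_w$-measure zero by virtue of random positive expansivity. The strategy rests on the inclusion
\begin{equation*}
W_w^s(p) \;\subseteq\; \bigcup_{N\geq 0}(f_w^N)^{-1}\bigl(\Gamma_{\delta}^{+}(f_w^N(p),\theta^N(w))\bigr),
\end{equation*}
where $\delta$ is the expansivity characteristic of $\mu$, combined with the $F$-invariance identity $\mu_w\circ(f_w^N)^{-1}=\mu_{\theta^N(w)}$, which follows from the Rokhlin disintegration of $\mu$ over $\pp$ together with the relation $\mu_w\circ f_w^{-1}=\mu_{\theta(w)}$.

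For the invariance step, let $\Omega_\star$ be a $\pp$-full set on which $\mu_{w'}(\Gamma_{\delta}^{+}(z,w'))=0$ for every $z\in\E_{w'}$; by $\theta$-invariance of $\pp$ the set $\Omega_{\star\star}:=\bigcap_{N\geq 0}\theta^{-N}(\Omega_\star)$ still has full $\pp$-measure. Fix $w\in\Omega_{\star\star}$ and $p\in\E_w$. Since every forward iterate $\theta^N(w)$ lies in $\Omega_\star$, the hypothesis gives $\mu_{\theta^N(w)}(\Gamma_{\delta}^{+}(f_w^N(p),\theta^N(w)))=0$ for every $N\geq 0$; setting $A_N:=(f_w^N)^{-1}(\Gamma_{\delta}^{+}(f_w^N(p),\theta^N(w)))$, the invariance identity yields $\mu_w(A_N)=0$. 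Countable subadditivity applied to the inclusion above then delivers $\mu_w(W_w^s(p))\leq\sum_{N\geq 0}\mu_w(A_N)=0$, and since the $\pp$-null exceptional set $\Omega\setminus\Omega_{\star\star}$ does not depend on $p$, this holds uniformly for every $p\in\E_w$.

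The verification of the inclusion is where the random setting differs subtly from the deterministic one. If $y\in W_w^s(p)$ then $d(f_w^n(y),f_w^n(p))\to 0$, and one seeks an index $N$ with $d(f_w^n(y),f_w^n(p))\leq\delta(\theta^n(w))$ for every $n\geq N$, which is equivalent to $y\in A_N$. This bound is immediate when $\delta$ is bounded below along the orbit $\{\theta^n(w)\}$, but can fail if $\delta\circ\theta^n$ approaches zero along a subsequence. The main obstacle is therefore to sharpen $\delta$ into a strictly smaller positive characteristic that remains bounded below along $\pp$-a.e.\ orbit: a natural route is to shrink $\delta$ to a constant below $\operatorname{ess\,inf}\delta$ when that quantity is positive, or more generally to build a replacement via a Poincar\'e-recurrence / Rokhlin-tower argument on the level sets $\{\delta\geq 1/k\}$. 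Since $\Gamma_{\delta'}^{+}\subseteq\Gamma_{\delta}^{+}$ whenever $\delta'\leq\delta$, random positive expansivity persists under this shrinking, so the two paragraphs above apply verbatim with $\delta$ replaced by $\delta'$, yielding the desired conclusion.
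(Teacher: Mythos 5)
Your argument follows the same route as the paper's proof of this lemma. There the $w$-stable set is decomposed as
\[
W^{s}_{w}(p)=\bigcap_{i}\bigcup_{j\in\N}\bigcap_{k\geq j}f_{w}^{-k}\bigl(B_{\theta^{k}(w)}[f_{w}^{k}(p),\gamma_{i}]\bigr),
\qquad
\bigcap_{k\geq j}f_{w}^{-k}\bigl(B_{\theta^{k}(w)}[f_{w}^{k}(p),\gamma_{i}]\bigr)=f_{w}^{-j}\bigl(\Gamma^{+}_{\gamma_{i}}(f_{w}^{j}(p),\theta^{j}(w))\bigr),
\]
for a monotone sequence of positive random variables $\gamma_{i}\to 0$, and then the identity $\mu_{w}\bigl(f_{w}^{-j}(A)\bigr)=\mu_{\theta^{j}(w)}(A)$ together with countable subadditivity kills each term once $\gamma_{i}$ is below the expansivity characteristic. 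Your sets $A_{N}=(f_{w}^{N})^{-1}\bigl(\Gamma^{+}_{\delta}(f_{w}^{N}(p),\theta^{N}(w))\bigr)$, the invariance step, and the bookkeeping with $\Omega_{\star\star}=\bigcap_{N\geq0}\theta^{-N}(\Omega_{\star})$ are exactly the same mechanism, so on those points you and the paper agree.

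The difficulty is the one you flag yourself, and your proposed repair does not close it. The inclusion $W^{s}_{w}(p)\subseteq\bigcup_{N}A_{N}$ requires, for each $y$ with $d(f_{w}^{n}(y),f_{w}^{n}(p))\to 0$, some $N$ with $d(f_{w}^{n}(y),f_{w}^{n}(p))\leq\delta(\theta^{n}(w))$ for all $n\geq N$, and this can fail whenever $\liminf_{n}\delta(\theta^{n}(w))=0$. Your fix is to replace $\delta$ by a positive $\delta'\leq\delta$ (needed so that $\Gamma^{+}_{\delta'}\subseteq\Gamma^{+}_{\delta}$ keeps the expansivity) that is bounded below along $\pp$-a.e.\ orbit. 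But such a $\delta'$ exists only when it is not needed: if $\operatorname{ess\,inf}\delta=0$ on an ergodic component of $(\theta,\pp)$, then $\pp(\delta'<\epsilon)>0$ for every $\epsilon>0$ there, and the set $B_{\epsilon}:=\bigcap_{n\geq0}\theta^{-n}(\{\delta'\geq\epsilon\})$ satisfies $B_{\epsilon}\subseteq\theta^{-1}(B_{\epsilon})$, hence is invariant mod $0$, while $\pp(B_{\epsilon})\leq\pp(\delta'\geq\epsilon)<1$; ergodicity forces $\pp(B_{\epsilon})=0$, so $\inf_{n}\delta'(\theta^{n}(w))=0$ for a.e.\ $w$ in that component, and no Poincar\'e-recurrence or Rokhlin-tower construction can circumvent this. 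Thus your argument is complete only under the extra assumption that the characteristic can be taken bounded below along a.e.\ orbit (for instance $\operatorname{ess\,inf}\delta>0$). For what it is worth, the paper's proof asserts the displayed set identity for $W^{s}_{w}(p)$ without justifying the inclusion ``$\subseteq$'' either, so it is exposed to the same objection; but as a self-contained proof, yours still has a gap at precisely the point you identified.
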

	\begin{proof}
		Let $\{\gamma_i\}_{i\geq 0}$ be a monotone sequence of positive random variables on $\Omega$, such that  $\gamma_{i}\rightarrow 0$ uniformly. We can describe the $w$-stable set of a point $x\in \E_{w}$ by
		\[
		W^{s}_{w}(x) = \bigcap_{i\in \N^{+}}\bigcup_{j\in \N}\bigcap_{k\geq j} f^{-k}_{w}(B_{w}[f_{w}^{k}(x),\gamma_{i}]).
		\]
		Since $i\in \N^{+}$ $\gamma_{i+1}\leq  \gamma_{i}$ for all $i \in \N^{+}$, so
		\[
		\bigcup_{j\in N}\bigcap_{k\geq j} f_{w}^{-k}(B_{w}[f^{k}_{w}(x),\gamma_{i+1}]) \subseteq \bigcup_{j\in N}\bigcap_{k\geq j} f_{w}^{-k}(B_{w}[f^{k}_{w}(x),\gamma_{i}]) 
		\]
		Thus, we obtain 
		\[
		\mu_{w}(W^{s}_{w}(x)) \leq  \lim_{i \seta \infty} \sum_{j\in \N} \mu_{w} \left( \bigcap_{k\geq j}  f_{w}^{-k} \left(  B_{w}[f^{k}_{w}(x),\gamma_{i}]\right) \right)  .  
		\]
		Notice that for every $j \in \N$, occurs
		\[
		\cap_{k\geq j} f^{-k}_{w}(B[f_{w}^{k}(x),\gamma_{i}]) = f^{-j}_{w}(\Gamma^{+}_{\gamma_{i}}(f^{j}_{w}(x), \theta^{j}(w))).
		\]
		By $F$-invariance of $\mu$ we have 
		\begin{eqnarray}
			\mu_{w}\left( \cap_{k\geq j} f^{-k}_{w}(B[f_{w}^{k}(x),\gamma_{i}])\right)  & = & \mu_{w}\left(f^{-j}_{w}(\Gamma^{+}_{\gamma_{i}}(f^{j}_{w}(x),\theta^{j}(w))) \right)  \nonumber \\
			& = & \mu_{\theta^{j}(w)}(\Gamma^{+}_{\gamma_{i}}(f^{j}_{w}(x),\theta^{j}(w))).  \nonumber	
		\end{eqnarray}
		For values of $i$  large enough such that $\gamma_{i} < \delta_{0}$, where $\delta_{0}$ is the characteristic of random expansiveness of $\mu$, then since $\theta$ is invertible we have    $\mu_{\theta^{j}(w)}(\Gamma_{\gamma_{i}}(f^{j}_{w}(x),\theta^{j}(w))) = 0$ for $\mathbb{P}$-a.e. $w\in \Omega$. Therefore $\mu_{w}\left( 	\cap_{k\geq j} f^{-k}_{w}(B[f_{w}^{k}(x),\gamma_{i}])\right) =0$ and we conclude the proof. 
		
	\end{proof}

	With the previous results, we are able to prove Theorem A.

	\begin{proof}[Proof of Theorem A]
		Given a positive random variable $\delta$ on $\Omega$, we consider the set defined in Lemma \ref{lema2'}
		\[
		\E_{\delta} = \{(w,x)\in \E ; \, \mu_{w}(\Gamma_{\delta}(x,w))= 0 \}	
		\]
		where $\{\mu_{w}\}$ are the conditional measures of $\mu$, an ergodic invariant measure such that $0 < h_{\mu}(F|\theta) < +\infty$.
		Our goal is to show that there exist measurable sets $B_{\delta} \subset \E_{\delta}$ such that $\mu(B_{\delta}) = 1$. Then, by Lemma \ref{lema2'} we can see that $\mu$ is random expansive. 
		%For any point $(w,x) \in \E$ we have 
		%$\Gamma_{\delta}(w,x) \subset f_{w}^{-1}(\Gamma_{\delta}(\theta(w),f_{w}(x))$ and by $F$-invariance of $\mu$ we obtain 
		% \[
		% \mu_{w}(\Gamma_{\delta}(w,x)) \leq \mu_{\theta(w)} (\Gamma_{\delta}(\theta(w),f_{w}(x))).
		%	 \]
		%So $\mu_{w}(\Gamma_{\delta}(w,x))=0$ whenever $(w,x)\in F^{-1}(\E_{\delta})$. 
		%As $(w,x)$ was chosen arbitrarily, we have $F^{-1}(\E_{\delta}) \subset \E_{\delta}$. We also have $F^{-1}(\text{int}(\E_{\delta})) \subset \text{int}(\E_{\delta})$ because $F$ is continuous and $\text{int}(\E_{\delta})$ is open. 
		%Since $\mu(F^{-1}(\text{int}(\E_{\delta}))) = \mu (\text{int}(\E_{\delta}))$, we conclude that $\text{int}(\E_{\delta})$ is essentially invariant, and by ergodicity of the measure $\mu$, we obtain that or $\mu(\text{int}(\E_{\delta})) = 0$ or $\mu(\text{int}(\E_{\delta})) = 1$. 

		Fix a random variable $\delta>0$. We define the random map $\varphi_{\delta}: \E \seta \R\cup\{+\infty\}$ given by 
		\[
		\varphi_{\delta}(w,x) = \liminf_{n\in \N}\dfrac{-\log\mu_{w}(B_{w}[x,\delta,n])}{n},
		\]
		where $B_{w}[x,\delta,n] =  \cap_{j=0}^{n-1}f_{w}^{-j}\left( B_{\theta^{j}(w)}[f_{w}^{j}(x),\delta] \right)$. We remind that $B_{w}[x,r]$ denotes the closed ball with radius $r$ around $x$ in $\E_{w}$.
		Let $\delta_{k}$ be a sequence of positive random variables such that $\delta_{k} \seta 0$ when $k \seta +\infty$. Define $H=h_{\mu}(f)/2$ and the sets 
		\[
		\E^{k}= \{(w,x)\in\E; \,  \varphi_{\delta_{k}}(w,x)> H  \}
		\]
		for all $k\in \mathbb{N}$. 
		
		We claim that $F^{-1}(\E^{k}) \subset \E^{k}$ for every $k \in \mathbb{N}$, that is, $F^{-1}((w,x)) \in \E^{k}$ if $(w,x) \in \E^{k}$. It is equivalent to showing that
		\[
		\varphi_{\delta_{k}}(\theta^{-1}(w),f_{\theta^{-1}(w)}^{-1}(x))> H \,\, \textrm{if} \,\, \varphi_{\delta_{k}}(w,x)> H.
		\]
		
		By definition, if we write $z=\theta^{-1}(w)$, we have that
		\[
		\varphi_{\delta}(z,f_{z}^{-1}(x)) = \liminf_{n\in \N}\dfrac{-\log\mu_{z}(B_{z}[f_{z}^{-1}(x),\delta,n])}{n},
		\]
		where $B_{z}[f_{z}^{-1}(x),\delta,n]) =  \cap_{j=0}^{n-1}f_{z}^{-j}\left( B_{\theta^{j}(z)}[f_{z}^{j}(f_{z}^{-1}(x)),\delta] \right)$.
		
		Since $\mu$ is an invariant probability, we obtain
		\[
		\mu_{w}(B_{w}[x,\delta,n]) = \mu_{z}(f_{z}^{-1}(B_{w}[x,\delta,n])).
		\]
		But
		\[
		f_{z}^{-1}(B_{w}[x,\delta,n]) = f_{z}^{-1}(\cap_{j=0}^{n-1}f_{w}^{-j}\left( B_{\theta^{j}(w)}[f_{w}^{j}(x),\delta] \right)) = \cap_{j=0}^{n-1}f_{z}^{-1}(f_{w}^{-j}\left( B_{\theta^{j}(w)}[f_{w}^{j}(x),\delta] \right)) \supset
		\]
		\[
		\supset \cap_{j=0}^{n-1}f_{z}^{-1}(f_{w}^{-j}\left( B_{\theta^{j}(w)}[f_{w}^{j}(x),\delta] \right)) \cap B_{z}[f_{z}^{-1}(x)),\delta] = B_{z}[f_{z}^{-1}(x),\delta,n]).
		\]
		
		Then, we obtain
		\[
		\mu_{z}(B_{z}[f_{z}^{-1}(x),\delta,n]) \le \mu_{z}(f_{z}^{-1}(B_{w}[x,\delta,n])) = \mu_{w}(B_{w}[x,\delta,n]),
		\]
		which implies that
		\[
		-\log \mu_{z}(B_{z}[f_{z}^{-1}(x),\delta,n]) \ge -\log \mu_{w}(B_{w}[x,\delta,n]),
		\]
		that is,
		\[
		\varphi_{\delta}(z,f_{z}^{-1}(x)) = \liminf_{n\in \N}\dfrac{-\log\mu_{z}(B_{z}[f_{z}^{-1}(x),\delta,n])}{n} \ge
		\]
		\[
		\ge \liminf_{n\in \N}\dfrac{-\log\mu_{w}(B_{w}[x,\delta,n])}{n} = \varphi_{\delta}(w,x) > H. 
		\]
		
		It means that $F^{-1}(w,x) = (z,f_{z}^{-1}(x)) \in \E^{k}$, if $(w,x) \in \E^{k}$ and we obtain $F^{-1}(\E^{k}) \subset \E^{k}$, as claimed. Once $\mu$ is ergodic, we know that $\mu(\E^{k}) = 0$ or $\mu(\E^{k}) = 1$. 
		
		Now, note that for all $(w,x)\in \E$, $\varphi_{\delta}(w,x)$  is decreasing on $\delta$, so $\E^{k_1} \subset \E^{k_2}$ whenever $k_1<k_2$. 
		As a result, by taking $\delta$ as a random variable, we have 
		\[
		\{(w,x)\in \E; \ \sup_{\delta>0} \varphi_{\delta}(w,x) \geq h_{\mu}(F) \} \subset \cup_{k\in \N} \E^{k}.
		\]
		Then $ \mu (\{(w,x)\in \E; \ \sup_{\delta>0} \varphi_{\delta}(w,x) \geq h_{\mu}(F) \}) \leq \lim_{k\seta \infty} \mu(\E^{k}).$
		
		\begin{claim}\label{BK}
			\[\mu (\{(w,x)\in \E; \ \sup_{\delta>0} \varphi_{\delta}(w,x) \geq h_{\mu}(f) \}) = 1.
			\]	
		\end{claim}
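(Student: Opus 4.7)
The plan is to recognize this claim as one half of a relative Brin--Katok local entropy formula. For each fixed $(w,x)$, the map $\delta \mapsto \varphi_\delta(w,x)$ is non-increasing, since enlarging $\delta$ enlarges the Bowen ball $B_w[x,\delta,n]$, hence increases $\mu_w(B_w[x,\delta,n])$, and therefore decreases $-\log\mu_w(B_w[x,\delta,n])/n$. In particular
$$
\sup_{\delta>0}\varphi_\delta(w,x) \;=\; \lim_{\delta \to 0^+}\varphi_\delta(w,x),
$$
so the claim reduces to showing that for $\mu$-a.e.\ $(w,x) \in \E$,
$$
\lim_{\delta\to 0^+}\liminf_{n\to\infty}\frac{-\log \mu_w(B_w[x,\delta,n])}{n} \;\ge\; h_\mu(f).
$$

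I would obtain this inequality by invoking (or reproducing) the random Brin--Katok entropy formula for bundle random dynamical systems, as developed by Zhu and related authors. A self-contained approach proceeds as follows. Given $\varepsilon>0$, pick a finite measurable partition $\mathcal{R}$ of $\E$ with $h_\mu(f,\mathcal{R})>h_\mu(f)-\varepsilon$, chosen so that the boundaries of its fiber atoms have $\mu_w$-null measure for $\pp$-a.e.\ $w$. The relative Shannon--McMillan--Breiman theorem (Bogensch\"utz--Kifer) then yields, for $\mu$-a.e.\ $(w,x)$,
$$
\lim_{n\to\infty}\frac{-\log \mu_w(\mathcal{R}^n_w(x))}{n} \;=\; h_\mu(f,\mathcal{R}),
$$
where $\mathcal{R}^n_w(x)$ denotes the atom of $\bigvee_{j=0}^{n-1}(f_w^j)^{-1}\mathcal{R}(\theta^j w)$ containing $x$. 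A standard comparison argument then bounds the Bowen ball $B_w[x,\delta,n]$ by a union of subexponentially many atoms of this refined partition, provided the random variable $\delta$ is chosen small enough relative to the mesh of $\mathcal{R}$; combining this with the SMB limit and letting $\varepsilon \to 0$ gives the desired lower bound.

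The main obstacle will be executing the covering step in the random setting. The number of atoms of $\mathcal{R}(\theta^j w)$ meeting a ball of radius $\delta(\theta^j w)$ in $\E_{\theta^j w}$ depends measurably on $w$, so one must control the product of these counts along $n$ iterates and show that it grows subexponentially for $\pp$-a.e.\ orbit. This is where the Birkhoff ergodic theorem applied to $\log N(\cdot)$, combined with the measurable selection of a partition whose fiber atom boundaries are $\mu_w$-negligible, becomes essential; once this random subexponential bound is established the comparison with the SMB statement is routine and the claim follows.
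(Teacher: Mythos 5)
Your proposal is correct and follows essentially the same route as the paper: both arguments reduce the claim, via the monotonicity of $\delta \mapsto \varphi_{\delta}(w,x)$, to the random Brin--Katok formula for constant $\delta$, which the paper simply cites from Zhu rather than re-deriving through the relative Shannon--McMillan--Breiman theorem and a covering argument as you sketch.
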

		\begin{proof}
			By using the random Brin-Katok formula (see \cite{Zhu}), we obtain 
			\[\mu (\{(w,x)\in \E; \ \sup_{\epsilon>0} \varphi_{\epsilon}(w,x) = h_{\mu}(f) \}) = 1,
			\]
			where $\epsilon$ is taken as constant random variables. By fixing $(w,x) \in \E$ and taking a random variable $\delta_{k} \leq \epsilon$ (uniformly for $k$ sufficiently large), we have that $\varphi_{\delta_{k}}(w,x) \geq \varphi_{\epsilon}(w,x)$. It implies that
			\[
			\sup_{\delta>0} \varphi_{\delta}(w,x) \geq \sup_{\epsilon>0} \varphi_{\epsilon}(w,x) = h_{\mu}(f). 
			\]
			$\mu$-almost everywhere, because the supremum over all random variables $\delta$ is bigger than the supremum over all constant random variables $\epsilon$. This means that
			\[
			\{(w,x)\in \E; \ \sup_{\delta>0} \varphi_{\delta}(w,x) \geq h_{\mu}(F) \} \supset \{(w,x)\in \E; \ \sup_{\epsilon>0} \varphi_{\epsilon}(w,x) = h_{\mu}(f) \},  
			\]
			that is,
			\[\mu (\{(w,x)\in \E; \ \sup_{\delta>0} \varphi_{\delta}(w,x) \geq h_{\mu}(F) \}) = 1.
			\] 
		\end{proof}
		
		Thus, by using of Claim \ref{BK}, we obtain $\lim_{k\seta \infty}\mu(\E^{k}) = 1$. Let $k_{0}\in \N$ such that $\mu (\E^{k_{0}}) = 1$. Given a random variable $\delta \leq \delta_{k_{0}}$ and a point $(w,x)\in \E^{k_{0}}$, so $\varphi_{\delta_{k_{0}}}(w,x)>H$. 
		Thus 
		\[\liminf_{n\in \N}\dfrac{-\log \mu_{w}(B_{w}[x,\delta_{k_{0}},n])}{n} >H,
		\]
		then there exist $n_{0}\in \N$ such that all $n\geq n_{0}$ occurs  
		\[
		\dfrac{-\log \mu_{w}(B_{w}[x,\delta_{k_{0}},n])}{n}>H,
		\]
		so $\mu_{w}(B_{w}[x,\delta_{k_{0}},n]) <e^{-n.H}$.
		Thus, when $n\seta +\infty$ we  conclude that 
		\[
		\lim_{n\seta +\infty} \mu_{w}(B_{w}[x,\delta_{k_{0}},n]) = 0,
		\]
		so it implies that 
		$(w,x) \in \E_{\delta_{k_{0}}} $. By arbitrariness of $(w,x)\in \E^{k_{0}}$ we obtain that $\E^{k_{0}} \subset \E_{\delta_{k_{0}}}$.  By taking $B_{\delta_{k_{0}}} = \E^{k_{0}}$, we have completed the proof of the first item of the theorem.
		
		To verify the second item, it is enough to note that due to item (1), $\mu$ is a positively random expansive measure, so by Lemma\ref{lema3} we have that the $w$-stable classes have zero measure for $\mathbb{P}$-a.s. $\mu_{w}$ conditional measures of $\mu$.  
	\end{proof}
	\begin{remark}
		In the case of the bundle (RDS) $f = \{f_{w}\}_{w}$ being such that the functions $f_{w}: \E_{w} \to \E_{\theta(w)}$ are homeomorphisms, we have the system invertible and we can consider a type of $w$-unstable class $W^{u}_{w}(x)$, which we can prove analogously to have zero measure $\mathbb{P}$-a.s. $\mu_{w}$ conditional measures of $\mu$. In fact, we have that $\Gamma(x,w) \subset \Gamma^{+}(x,w)$.
	\end{remark}
	
	\section{Proof of Theorem B}
	
	We are going to divide the proof of Theorem B into three lemmas, the first of which consists of the invariance of the set $\Gamma_\delta (x,w)$. The second lemma starts from an expansive measure and using the pullback in each fiber, a new system of expansive measures is constructed. Finally, we define a sequence of measures in each fiber, then by the compactness of the fibers, we show the existence of a measurable and invariant measure of disintegration. In this part let us denote the orbits of $w$ through $\theta$ as $\theta^{n}(w)= w_n$ for all $n\in \mathbb{Z}$ considering $w_0= w$.
	\begin{lemma}
		\label{lemma1thc}
		The sets $\Gamma_\delta (x,w)$ are invariant with respect to the dynamics on the fibers for all $(w,x)\in \E$ i.e.  
		\[
		(f_{w_{-1}})^{-1}(\Gamma_\delta (x,w)) = \Gamma_{\delta} ( (f_{w_{-1}})^{-1}(x),w_{-1}).
		\]
	\end{lemma}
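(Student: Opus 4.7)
The plan is to prove both inclusions by a direct orbit-comparison argument, using the cocycle identity $f^{k}_{w}\circ f_{w_{-1}}=f^{k+1}_{w_{-1}}$ together with the simple index shift $\theta^{k}(w)=\theta^{k+1}(w_{-1})$. Since everything reduces to comparing distances along entire $\Z$-orbits, the proof is essentially a relabeling of iterates.

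First I would unpack the definition: for any $(u,p)\in\E$,
\[
\Gamma_{\delta}(p,u)=\bigl\{q\in\E_{u}:d(f^{k}_{u}(p),f^{k}_{u}(q))\le\delta(\theta^{k}(u))\text{ for all }k\in\Z\bigr\}.
\]
Setting $z=(f_{w_{-1}})^{-1}(x)\in\E_{w_{-1}}$ (so that $x=f_{w_{-1}}(z)$), I would reformulate the two sides. A point $y\in\E_{w_{-1}}$ lies in $(f_{w_{-1}})^{-1}(\Gamma_{\delta}(x,w))$ if and only if $f_{w_{-1}}(y)\in\E_{w}$ and
\[
d\bigl(f^{k}_{w}(x),f^{k}_{w}(f_{w_{-1}}(y))\bigr)\le\delta(\theta^{k}(w))\qquad\text{for all }k\in\Z,
\]
while $y\in\Gamma_{\delta}(z,w_{-1})$ if and only if
\[
d\bigl(f^{j}_{w_{-1}}(z),f^{j}_{w_{-1}}(y)\bigr)\le\delta(\theta^{j}(w_{-1}))\qquad\text{for all }j\in\Z.
\]

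Next I would apply the cocycle identity on both arguments of the distance in the first expression: $f^{k}_{w}(f_{w_{-1}}(y))=f^{k+1}_{w_{-1}}(y)$ and $f^{k}_{w}(x)=f^{k}_{w}(f_{w_{-1}}(z))=f^{k+1}_{w_{-1}}(z)$. Simultaneously, $\delta(\theta^{k}(w))=\delta(\theta^{k+1}(w_{-1}))$ since $w=\theta(w_{-1})$. Thus the first condition becomes
\[
d\bigl(f^{k+1}_{w_{-1}}(z),f^{k+1}_{w_{-1}}(y)\bigr)\le\delta(\theta^{k+1}(w_{-1}))\qquad\text{for all }k\in\Z,
\]
which, after the reindexing $j=k+1$ (a bijection of $\Z$), is exactly the second condition. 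This establishes the set-theoretic equality on the fiber.

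The only obstacle worth mentioning is the bookkeeping on $\Z$: one should make sure the $\pm n$-balls used in the definition really do yield the full two-sided orbit intersection (so that the reindexing $k\mapsto k+1$ preserves the intersection and does not lose a boundary index), and that $f_{w_{-1}}$ is invertible as a map $\E_{w_{-1}}\to\E_{w}$ under the hypothesis that the $f_w$ are homeomorphisms, so that the preimage under $f_{w_{-1}}$ is well defined on $\E_{w_{-1}}$. Once these are observed, the argument is complete and both inclusions are simultaneous.
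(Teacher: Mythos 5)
Your proposal is correct and follows essentially the same route as the paper: both proofs rest on the cocycle identity $f^{k}_{w}\circ f_{w_{-1}}=f^{k+1}_{w_{-1}}$ and the shift $\delta(\theta^{k}(w))=\delta(\theta^{k+1}(w_{-1}))$, the paper merely spot-checking the indices $k=-1,0,1$ and asserting the general pattern, while you carry out the reindexing $j=k+1$ uniformly over all of $\Z$ and note explicitly that it is a bijection, so both inclusions follow at once. No gap.
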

	
	\begin{proof}
		Let $y\in (f_{w_{-1}})^{-1}(\Gamma_\delta (w,x))$, then $f_{w_{-1}}(y) \in \Gamma_\delta (w,x)$, therefore
		\[
		d(f^{k}_{w}(x),f_{w}^{k}(f_{w_{-1}}(y)) ) < \delta_{w_{k}}
		\]
		For $k=0$ we have
		\[
		d(x, f_{w_{-1}}(y)) < \delta_w,
		\]
		which corresponds to $k=1$ in $\Gamma_{\delta} (w_{-1}, (f_{w_{-1}})^{-1}(x))$. Indeed 
		\[
		d(f_{w_{-1}}(y), f_{w_{-1}}\circ (f_{w_{-1}})^{-1}(x)) < \delta_{\theta(w_{-1})}= \delta_{\theta(\theta^{-1}(w))}=\delta_w.
		\]
		Again, for $k= 1$ with $f_{w_{-1}}(y) \in \Gamma_\delta (w,x)$ we have
		\[
		d(f_{w}(x), f_{w}(f_{w_{-1}}(y))) < \delta_{w_1},
		\]
		which corresponds to $k=2$ in $\Gamma_{\delta} (w_{-1}, (f_{w_{-1}})^{-1}(x))$. Indeed 
		\begin{align*}
			& d(f^{2}_{w_{-1}}(y), f^{2}_{w_{-1}}((f_{w_{-1}})^{-1}(x))) < \delta_{\theta^{2}(w_{-1})}\\ 
			& d(f_{\theta(w_{-1})}\circ f_{w_{-1}}(y), f_{\theta(w_{-1})}\circ f_{w_{-1}}\circ (f_{w_{-1}})^{-1}(x) ) < \delta_{\theta(w)}\\
			& d(f_w \circ f_{w_{-1}}(y), f_{w}(x)) < \delta_{w_1}.
		\end{align*}
		By last, for $k=-1$ with $f_{w_{-1}}(y) \in \Gamma_\delta (w,x)$ we have
		\[
		d((f_{w_{-1}})^{-1}(x), (f_{w_{-1}})^{-1}\circ f_{w_{-1}}(y)) = d((f_{w_{-1}})^{-1}(x), y) < \delta_{w_{-1}}.
		\]
		That corresponds $k=0$ in $\Gamma_{\delta} (w_{-1}, (f_{w_{-1}})^{-1}(x))$. Indeed 
		\[
		d((f_{w_{-1}})^{-1}(x), y) < \delta_{w_{-1}}.
		\]
		In this way, the sequence $(\delta_{w_{-1}})_{k\in \mathbb{Z}}$ in the fiber $w_{-1}$ for $\Gamma( (f_{w_{-1}})^{-1}(x), w_{-1})$ is given by $\delta_{w_k} = \delta_{(w_{-1})_{k+1}}$ for all $k\in \mathbb{Z}$.
	\end{proof}
	
For the next result we need the following definition: Let $M$ and $N$ be compact metric spaces
	and a homeomorphism $\varphi : M \to  N$. We denote by $\varphi_{\ast}:\mathcal{M}_{1}(M)\longrightarrow  \mathcal{M}_{1}(N)$  the pullback of $\mu\in \mathcal{M}_{1}(M)$
	defined by $\varphi_{\ast} (\mu)(A) = \mu (\varphi^{-1}(A))$ for all borelian $A \subset M$. 
	The topology we are going to consider on the space $\mathcal{M}_{1}(M)$ is the weak* topology, which is generated by the basis of neighborhoods, 
	$$
	V(\mu_0, \Phi, \epsilon)=\biggr\{\mu_{1}\in \mathcal{M}_{1}(M): \biggr|\int \varphi_{i} \  d\mu_0 -  \int \varphi_{i} \ d\mu_1 \biggl| < \epsilon \biggl\} \\
	$$
	where $\epsilon>0$ and $\Phi=\{\varphi_i:M\rightarrow \mathbb{R}\}$ is a set of continuous functions for $1\leq i \leq m$ and $m \in \mathbb{N}$. Similarly, we consider the weak* topology defined on $\mathcal{M}_{1}(N)$.
	
	Now, we observe that given $\epsilon >0$,   $\phi_{i}: N \longrightarrow \mathbb{R}$  continuous functions for $1\leq i \leq m$ and $\mu_0, \mu_1 \in \mathcal{M}_{1}(M)$, we have
	$$
	\biggr|\int \phi_{i} \  d \varphi_{\ast}\mu_0 -  \int \phi_{i} \ d  \varphi_{\ast}\mu_1 \biggl|\\
	= \biggr|\int (\phi_{i}\circ \varphi) \ d\mu_0 -  \int (\phi_{i}\circ \varphi) \ d\mu_1 \biggl| < \epsilon  \\
	$$
	for all \ $1\leq i \leq m$.  
	This means that $\varphi_{\ast}$ is continuous in the weak* topology.			

\begin{lemma}
	\label{lemma2thc}
	If $(\mu_w)_{w\in \Omega}$ is the decomposition of the expansive measure $\mu$ for a RDS $f=\{ f_w\}_w$, then  the decomposition $(f_{w_{-1}})_{*}\mu_{w_{-1}})_{w\in \Omega}$ is expansive for this same system.
\end{lemma}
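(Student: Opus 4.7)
The plan is to identify, for each $w$, a representation
\[
\nu_w(A) := (f_{w_{-1}*}\mu_{w_{-1}})(A) = \mu_{w_{-1}}\bigl(f_{w_{-1}}^{-1}(A)\bigr),\qquad A \subset \E_w \text{ Borel},
\]
and to exhibit the same random expansivity characteristic $\delta$ that witnesses expansivity of $\mu$ as a witness for the new family $(\nu_w)_{w\in\Omega}$. Thus one needs to show, on a full $\PP$-measure set of $w$'s, that $\nu_w(\Gamma_\delta(x,w))=0$ for every $x\in \E_w$. The whole proof reduces to combining the definition of $\nu_w$ with the fibered invariance identity of Lemma \ref{lemma1thc}.

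First, I would pick the full-measure set $\Omega_*\subset\Omega$ provided by the hypothesis, namely the set where $\mu_w(\Gamma_\delta(y,w))=0$ for all $y\in\E_w$, and set $\Omega_0 := \theta(\Omega_*)$; since $\theta$ is invertible and $\PP$-preserving, $\PP(\Omega_0)=1$, and for every $w\in\Omega_0$ we have $w_{-1}\in\Omega_*$. Then, for such a $w$ and any $x\in\E_w$,
\[
\nu_w\bigl(\Gamma_\delta(x,w)\bigr) \;=\; \mu_{w_{-1}}\!\left(f_{w_{-1}}^{-1}\bigl(\Gamma_\delta(x,w)\bigr)\right) \;=\; \mu_{w_{-1}}\!\left(\Gamma_\delta\bigl(f_{w_{-1}}^{-1}(x),w_{-1}\bigr)\right) \;=\; 0,
\]
where the second equality is exactly Lemma \ref{lemma1thc} and the last uses that $w_{-1}\in\Omega_*$. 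The shift of indices in the $\delta$-sequence pointed out at the end of the proof of Lemma \ref{lemma1thc} causes no trouble: $\delta$ is a single random variable on $\Omega$, so the expansivity of $\mu$ at the fiber $w_{-1}$ is already formulated with the shifted sequence $(\delta(\theta^k(w_{-1})))_{k\in\Z}$.

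The only thing that still needs verification is that the assignment $w\mapsto\nu_w$ is measurable (so that $(\nu_w)$ is a legitimate disintegration of some $\nu\in\PP_\PP(\E)$). This follows from joint measurability of $(w,x)\mapsto f_w(x)$ together with the measurability of the original disintegration $w\mapsto\mu_w$: the map $w\mapsto \mu_{w_{-1}}\circ f_{w_{-1}}^{-1}$ inherits measurability from these two facts via Rokhlin's theorem applied to the measure $\nu$ defined by $d\nu(w,x)=d\nu_w(x)\,d\PP(w)$. I expect the main conceptual obstacle to be precisely this measurability bookkeeping, because the expansivity part itself is a one-line computation once Lemma \ref{lemma1thc} is in hand; the invariance of $\Gamma_\delta$ under the fiberwise dynamics is really doing all of the work.
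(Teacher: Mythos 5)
Your argument is correct and is essentially the paper's own proof: both rest entirely on the invariance identity $(f_{w_{-1}})^{-1}(\Gamma_\delta(x,w)) = \Gamma_\delta((f_{w_{-1}})^{-1}(x),w_{-1})$ from Lemma \ref{lemma1thc}, the definition of the pushforward, and the expansivity of $\mu_{w_{-1}}$. Your extra care with the full-measure set $\theta(\Omega_*)$ and the measurability of $w\mapsto\nu_w$ (which the paper defers to Lemma \ref{lemma3thc}) only tightens the same argument.
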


\begin{proof}
	By the previous Lemma and taking that $(\mu_w)_{w\in \Omega}$ is expansive, we have
	\begin{align*}
		\mu_{w_{-1}} (\Gamma_{\delta} ((f_{w_{-1}})^{-1}(x)), w_{-1}) &= \mu_{w_{-1}} ((f_{w_{-1}})^{-1}(\Gamma_\delta (x,w))) \\
		&=(f_{w_{-1}})_{*}\mu_{w_{-1}}  (\Gamma_{\delta}(x,w)) \\
		&= 0.
	\end{align*}
\end{proof}
For $w \in \Omega$, we define the following sequence of measures for $n\geq 2$
\begin{align*}
	\mu_{n,w} &= \dfrac{1}{n} \biggl( \mu_w +\sum_{i=1}^{n-1}\mu_{w_{-i}}(f_{w_{-1}}\circ \cdots\circ f_{w_{-i}})^{-1}\biggr)\\
	& = \dfrac{1}{n}\biggl(\mu_w+\sum_{i=1}^{n-1} (f_{w_{-1}}\circ \cdots \circ f_{w_{-i}})_{*}\mu_{w_{-i}}\biggr),
\end{align*}
each $\mu_{n,w}$ is a probability measure on $\E_w$ compact, then due to compactness of $\mathcal{M}_{1}(M)$, there exists a subsequence $n_k \to \infty$ such that $\mu_{n_{k},w}$ converges to a Borel probability $\mu'_w$ in the weak* topology. Without loss of generality, let us assume that the subsequence is $\mu_{n,w}$. \\
To find a measurable and invariant decomposition along the fibers, we are going to consider two sequences of measures: one of them on $\E_w$ and other on $\E_{\theta(w)}=\E_{w_1}$.

\begin{lemma}
	\label{lemma3thc}
	If $\mu_{n,w}\to \mu'_{w}$ and $(f_{w})_{\ast}\mu_{n,w}\to \mu'_{w_1}$ in the weak* topology, then $\mu'_{w}((f_{w})^{-1}(A))= \mu'_{w_1}(A)$ for all measurable subset $A \subset \E_{w_1}$. Therefore $(\mu'_w)_{w\in \Omega}$ is measurable, invariant and expansive.
\end{lemma}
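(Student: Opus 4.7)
The core of the lemma is the fiberwise invariance identity $(f_w)_\ast \mu'_w = \mu'_{w_1}$, which is literally the stated equality $\mu'_w(f_w^{-1}(A)) = \mu'_{w_1}(A)$ for every Borel $A \subset \E_{w_1}$. I would establish this by a Krylov--Bogolyubov-style telescoping argument comparing the Ces\`aro averages $(f_w)_\ast \mu_{n,w}$ and $\mu_{n,w_1}$, and then read off measurability, invariance, and expansiveness.

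First I would push $\mu_{n,w}$ forward by $f_w$ and reindex the resulting sum. Using the identities $(w_1)_{-j} = w_{1-j}$ and $f_{(w_1)_{-1}} = f_w$, the $i$-th summand of $(f_w)_\ast \mu_{n,w}$ coincides with the $(i+1)$-th summand of $\mu_{n,w_1}$; interior terms cancel and only two boundary terms survive, giving
\[
(f_w)_\ast \mu_{n,w} - \mu_{n,w_1} \;=\; \frac{1}{n}\bigl[(f_{w_0}\circ\cdots\circ f_{w_{1-n}})_\ast \mu_{w_{1-n}} - \mu_{w_1}\bigr].
\]
Each bracketed term is a probability measure on $\E_{w_1}$, so the total-variation norm of the left-hand side is bounded by $2/n$ and in particular tends to $0$ weakly. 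Since $f_w$ is continuous on the compact fiber $\E_w$, the pushforward $(f_w)_\ast$ is continuous for the weak-$\ast$ topology, so the hypothesis $\mu_{n,w}\to \mu'_w$ yields $(f_w)_\ast\mu_{n,w}\to (f_w)_\ast\mu'_w$; combined with $\mu_{n,w_1}\to \mu'_{w_1}$ and the vanishing bound above, this forces $(f_w)_\ast\mu'_w = \mu'_{w_1}$.

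Invariance of $(\mu'_w)_{w\in\Omega}$ is precisely this identity holding $\pp$-a.e. For measurability of $w \mapsto \mu'_w$, I would work jointly on the compact space $\E$: the assembled measures $\nu_n := \int \mu_{n,w}\,d\pp \in \mathcal{P}_\pp(\E)$ inherit measurable fiber dependence from the $\mu_{n,w}$, and by weak-$\ast$ compactness of $\mathcal{P}(\E)$ one extracts a common subsequence $\nu_{n_k}\to \nu'\in \mathcal{P}_\pp(\E)$ whose Rokhlin disintegration supplies a measurable family coinciding $\pp$-a.e.\ with the fiberwise limits. For expansiveness, iterating Lemma \ref{lemma2thc} together with the invariance of the sets $\Gamma_\delta$ supplied by Lemma \ref{lemma1thc} shows that every summand of $\mu_{n,w}$ is the fiber-at-$w$ of an expansive measure with the common characteristic $\delta$, hence $\mu_{n,w}(\Gamma_\delta(x,w)) = 0$ for every $n$ and every $x\in \E_w$.

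The main obstacle I anticipate is transferring this vanishing to the weak limit $\mu'_w$: since $\Gamma_\delta(x,w)$ is closed (not open), the Portmanteau inequality $\limsup_n \mu_{n,w}(\Gamma_\delta(x,w)) \le \mu'_w(\Gamma_\delta(x,w))$ is vacuous when the left-hand side is zero. A natural workaround is to shrink the characteristic to $\delta' < \delta$ strictly: then $\Gamma_{\delta'}(x,w)$ sits in the interior of each $B_w[x,\delta,\pm N]$, so approximating it from outside by open neighborhoods and exploiting the reverse Portmanteau inequality for open sets may force $\mu'_w(\Gamma_{\delta'}(x,w)) = 0$; alternatively, by Lemma \ref{lema1} it suffices to prove this for $\mu'_w$-a.e.\ $x$, which opens the way to a Fubini-type argument on the closed set $\{(x,y)\in \E_w\times \E_w : y\in \Gamma_{\delta'}(x,w)\}$ relative to $\mu'_w \times \mu'_w$.
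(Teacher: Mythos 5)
Your core argument is the same as the paper's: both proofs reindex the Ces\`aro sum $\mu_{n,w_1}$ so that it matches $(f_w)_\ast\mu_{n,w}$ up to two boundary terms of total mass $O(1/n)$, and then pass to the limit to get $(f_w)_\ast\mu'_w=\mu'_{w_1}$. Your formulation via the total-variation bound $\|(f_w)_\ast\mu_{n,w}-\mu_{n,w_1}\|\le 2/n$ together with weak-$\ast$ continuity of the pushforward is in fact cleaner than the paper's, which evaluates the limit on an arbitrary measurable set $A$ even though weak convergence only controls $\mu'_w$-continuity sets; your version closes that small gap. Likewise, your global construction of the measurable family (assembling $\nu_n=\int\mu_{n,w}\,d\pp$ on the compact bundle, extracting one subsequence, and disintegrating the limit) is more honest than the paper's one-line appeal to measurability of $w\mapsto\mu_w$, since fiberwise subsequences need not be chosen consistently in $w$.

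The obstacle you flag at the end is genuine, and you should be aware that the paper does not resolve it either: after establishing the invariance identity, the paper simply declares $\mu'$ to be expansive without argument. As you correctly observe, $\Gamma_\delta(x,w)$ is closed, so the Portmanteau inequality $\limsup_n\mu_{n,w}(C)\le\mu'_w(C)$ for closed $C$ gives nothing when the left side vanishes; knowing $\mu_{n,w}(\Gamma_\delta(x,w))=0$ for all $n$ does not by itself yield $\mu'_w(\Gamma_\delta(x,w))=0$. Your two proposed repairs (halving the characteristic and sandwiching $\Gamma_{\delta/2}$ between closed and open $n$-step dynamical balls, or passing to the closed incidence set in $\E_w\times\E_w$ and invoking the a.e.\ criterion of Lemma \ref{lema1}) are the right places to look --- this is essentially how the deterministic statement is handled in \cite{Morales-Sirvent} --- but neither is carried out, and the interchange of the limit in $n$ (weak convergence) with the limit in $m$ (shrinking dynamical balls) is exactly where the work lies. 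So: same route as the paper for the invariance identity, sharper on two technical points, but the expansiveness of the limit measure remains to be proved in your write-up just as it remains unproved in the paper.
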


\begin{proof}
	Let $\{\phi_{i}: \E_{w_1} \longrightarrow \mathbb{R}\}_{1\leq i \leq m}$ be a family of continuous functions and $\epsilon >0$. So, considering a basis of neighborhoods in the weak* topology $V(\mu'_{w_1}, \{\phi_i\}_{1\leq i \leq m}, \epsilon/2)$ on $\mathcal{M}_{1}(\mathcal{E}_{w_1})$, we have
		
		$$
		\biggl|\int \phi_{i} \ d(f_w)_{\ast}\mu_{n,w} - \int \phi_i \ d\mu'_{w_1}  \biggr| < \dfrac{\epsilon}{2}.
		$$
		As $f_w$ is continuous, the family $\{\phi_i\circ f_w \}_{1\leq i\leq m}$ is also of continuous functions. Again, for the basis of neighborhoods in the weak* topology $V(\mu'_{w}, \{\phi_i\circ f_w\}_{1\leq i \leq m}, \epsilon/2)$ on $\mathcal{M}_{1}(\mathcal{E}_w)$, we have
		$$  
		\biggl|\int \phi_{i}\circ f_w \ d\mu_{n,w} - \int \phi_i\circ f_w \ d\mu'_{w}  \biggr| < \dfrac{\epsilon}{2}
		$$
		for every $1 \leq i \leq m$ and sufficiently large $n$. Therefore,
		
		\begin{align*}
			&\biggl|\int \phi_i \ d(f_w)_{\ast}\mu'_w - \int \phi_i \ d\mu'_{w_1}  \biggr| = \biggl|\int \phi_i\circ f_w \ d\mu'_w - \int \phi_i \ d\mu'_{w_1}  \biggr| \leq \\
			& \leq \biggl|\int \phi_i\circ f_w \ d\mu'_w - \int \phi_i \circ f_w \ d\mu_{n,w}  \biggr|+ \biggl|\int \phi_i\circ f_w \ d\mu_{n,w} - \int \phi_i \ d(f_w)_{\ast}\mu_{n,w}  \biggr| +\\
			&+ \biggl| \int \phi_i \ d(f_w)_{\ast}\mu_{n,w} - \int \phi_i \ d\mu'_{w_1}\biggr| < \epsilon,
		\end{align*}
		for every $1 \leq i \leq m$  and sufficiently large $n$. From here, we have
		$$
		(f_{w})_{\ast}\mu'_w = \mu'_{w_1}.
		$$
						
	Using the fact that the disintegration $(\mu_w)_{w\in \Omega}$ is measurable. Then we have the measurability of $w\mapsto \mu'_w$. Now let
	\[
	d\mu'(w,x):= d\mu'_w(x) d\mathbb{P}(w) 
	\]
	we obtain that $\mu'$ is a measurable expansive invariant probability measure of the system.

		\end{proof}

		%\begin{corollary}
		%\label{coro1}
		%If the $f_w:X\rightarrow X$ are expanding for all $w\in \Omega$ and $h_{\mu}(F|\theta) > 0$. Then,  if $(\mu_w)_{w\in \Omega}$ is the decomposition of $\mu$ over the fibers, so $\mu_w$ for $w\in \Omega$ are not Dirac measures or are not atomic
		
		%\end{corollary}

		\section{Proof of Theorem C}
		
		In this section, we will prove our third main result adapting some ideas from \cite{Artigue-Carrasco}.
		
		\begin{proof}[Proof of Theorem C]
			$(i)\to (ii)$  By hypothesis there exist $\Omega_0$ with $\mathbb{P}(\Omega_0)=1$   and a random variable $\delta=\delta(w)>0$ such that $\Gamma_{\delta}(x,w)$ is countable for all $w\in \Omega_0$  and $x\in X$. Let $\mu$ be a Borel probability measure on $\Omega \times X$ such that its factorization is non-atomic on $\Omega_0$. By $\sigma$-additivity we have that $\mu_w(\Gamma_{\delta}(x,w))=0$ for all $w\in \Omega_0$ and all $x \in X$. Therefore, $\mu$ is a random expansive measure for $f$.                                           
			
			\medskip
			
			$(ii)\to (i)$  Arguing by contradiction, we assume that $f$ is not random countably-expansive, that is, there exists a sequence of positive random variable  $\delta_{n}(w)\rightarrow 0$ and for each $n$, there exists a set $A_n\subset \Omega$, such that $\pp(A_n)>0$ and for each $w\in A_n$ there exists a point $x\in X$ such that $\Gamma_{\delta_n}(x,w)$ is uncountable. For each $n\ge 1$ we choose some $(w_n,x_n)$ as above. By using some arguments of \cite{Morales-Sirvent} (see Proposition 1.7 in \cite{Morales-Sirvent} and also Theorem 8.3  in \cite{Parthasarathy}), we can obtain a non-atomic Borel probability measure $\mu_{(w_n,x_n)}$ on $X$ such that $\mu_{(w_n,x_n)}(\Gamma_{\delta_n}(x_n , w_n))=1$. Thus,  defining  
			\[
			\mu(B) =\sum_{n=1}^{\infty} \frac{1}{2^{n}}\mu_{(w_n,x_n)}(B),
			\]
			where $B$ is a measurable subset of $X$, the measure $\mu$ is a non-atomic Borel probability on $X$.
			
			Consider the product measure $\nu = \mu \times \mathbb{P}$ on $\Omega\times X$, then we have that $\nu\in \mathcal{P}_{\pp}(\Omega\times X)$  and its factorization is non-atomic in each fiber and given by $\mu$ for all $w\in \Omega$ i.e., it is the same measure on each fiber.
			
			On the other hand, we have that $\nu$ is a random expansive measure for $f$ with characteristic of expansivity $\delta$, that is, 
			$\nu_w(\Gamma_{\delta}(x,w))=\mu(\Gamma_{\delta}(x,w))=0$ for $\pp$-a.e. $w$ and $x\in X$. In particular, we have $\mu(\Gamma_{\delta}(x_n,w_n)) = 0$. 
			
			Consider $n^{\ast}\in \mathbb{N}$ such that $\delta_{n} < \delta$, whenever  $n>n^{\ast}$. Thus, we obtain $\Gamma_{\delta_{n}}(x_{n},w_n) \subset \Gamma_{\delta}(x_{n},w_n)$, which implies 
			\[\mu(\Gamma_{\delta}(x_{n},w_n)) \geq \mu(\Gamma_{\delta_{n}}(x_{n},w_n)) \geq (1/2^{n})\mu_{(w_n,x_n)}(\Gamma_{\delta_{n}}(x_{n},w_n)) > 0
			\]
			and we get a contradiction.
		\end{proof}

		%Given $\mu \in \mathcal{P}(\E)$ a Borel measure, we recall that $F$ is random $\mu$-expansive, if there exist $\tilde{\Omega}_{\mu}\subset \Omega$ with $\mathbb{P}(\tilde{\Omega}_{\mu}) = 1$ and a random variable $\delta = \delta(w)>0$, such that for all $w\in\tilde{\Omega}_{\mu}$ occurs $\mu_{w}(\Gamma_{\delta}(w,x))=0$ for all $x\in \E_{w}$. 

		\section{Examples}\label{Examples}

		%\begin{example}
		%	Let us assume that $M$ is a differentiable manifold and the deterministic map $f:M \to M$ has a hyperbolic invariant set $\Delta$. Let $\mathcal{U}(f)$ be a neighborhood of $f$ in the space of $C^{1}$ maps from the open set $U$ to $M$ (endowed with the $C^1$ topology). If $\mathcal{U}(f)$ is chosen small enough and $F$ is a RDS generated by $f_w \in \mathcal{U}(f)$ for all $w\in \Omega$, then for each $f_w$ there exists a random hyperbolic set $\Delta_w$  close to $\Delta$. Thus, by taking $\hat{\Delta} = \cup_{w}\Delta_w$  and 
		%	\[
		%	F:\Omega\times \hat{\Delta} \to \Omega\times \hat{\Delta}.
		%	\]
		%	The dynamics in the random hyperbolic set is random expansive(see \cite{Kifer99}). 
		%\end{example}
		In this section we present some examples related to the notions of expansivity introduced in the paper.

		The first example consists of a random transformation that has an expansive measure in the random sense.

		\begin{example}
			Consider an abstract dynamical system $(\Omega, \mathcal{F}, \mathbb{P}, \theta)$ and the compact metric space $X = \bar{\N}^{\N}$ where $\bar{\N}$ is the one point compactification of $\N$ and $X$ is endowed with the product of the discrete topology on $\N$. A metric on $X$ is defined by 
			\[
			d(x,y)  =  \sum^{+\infty}_{i=0} 2^{-i} \left|\frac{1}{x_i} - \frac{1}{y_i}\right|.
			\]
			The (left-) shift $\sigma$  on $X$ defined by $\sigma (\{x_n\}) = \{x_{n+1}\}$ is continuous.
			
			For a random variable $k:\Omega \to \N$  that satisfies $k(w)\geq 2$ for all $w\in \Omega$,  we put 
			\[
			\Sigma_{k}^{+}(w) : = \{x\in X ; \, x_i \leq k(\theta^{i}(w))  \  \  \textrm{for all} \  \ i\in \N \}.
			\]
			It was shown \cite{BG}  that $\{\sigma: \Sigma_{k}^{+}(w) \to \Sigma_{k}^{+}(\theta(w))\}$ defines a random dynamical system on the compact bundle $\Sigma_{k}^{+} = \{(w,x) ; \, x \in \Sigma_{k}^{+}(w)\}$.
			
			Given $x=\{x_n\}\in X$ we denote by 
			\[C_{w}(x_0  x_1  ...  x_{n-1}) = \{ y=\{y_n\}\in X: x_i = y_i \ \textrm{for all} \    0 \leq i \leq n-1 \}
			\]
			the $(n,w)$-cylinder that contains $x$. The $(n,w)$-cylinders form a sigma-algebra on $\Sigma^{+}_{k}(w)$. For each  $w\in \Omega $ we define the measure on the $(w,n)$-cylinder by
			\begin{equation}\label{eq1}
				\tilde{\mu}_{w} (C_{w}(x_0  x_1  ...  x_{n-1})) = \prod_{i=0}^{n-1} p^{l}_{\theta^{i}(w)}(\{x_i\}) = \dfrac{1}{k(w)\cdot ... \cdot k(\theta^{n-1}(w))}
			\end{equation}
			for random probability vectors 
			$\{p(w) = (p^1_w, ... , p^{k(w)}_w)\}$, such that $
			p_{w}^{l}(\{j\}) = \dfrac{1}{k(w)}
			$ for all $l\in \{1,...,k(w)\}$.  Then, by the Kolmogorov extension theorem, for $\mathbb{P}$-a.s. $w\in \Omega$ there exists a unique probability measure $\mu_w$ on $\Sigma_{k}^{+}(w)$ that satisfies \eqref{eq1}. We
			define the probability measure on $\Sigma_{k}^{+}$ by $\mu (A) = \int \mu_w(A(w)) d\mathbb{P}(w)$ for $A\subset \Sigma_{k}^{+}$.
			In \cite{Kifer2000} the authors showed that the random shift is random expansive with characteristic of expansivity $k^{-2}(w)$. We will show that the measure $\mu$ is random expansive for the random shift with characteristic of expansivity $k^{-2}(w)$. In fact, given $w\in \Omega$ if there exists some $y \in \Sigma_{k}^{+}(w)$ such that 
			$\mu_w(\Gamma_{k^{-2}(w)}(w,y)) > 0$, by the random expansivity of random shift, there exists some $m_{w}$ such that 
			$\mu_{w} (\{y_w\}) > \dfrac{1}{2^{m_w}}$. On the other hand, for $n>m_{w}$ the measure of any $(w,n)$-cylinder that contains $y_w$ is given by 
			\[
			\mu_{w} (\{y_w\}) < \mu_{w}(C_{w}(y_0  y_1  ...  y_{n-1})) = \dfrac{1}{k(w)\cdot ... \cdot k(\theta^{n-1}(w))} < \dfrac{1}{2^{n}}.
			\]
			This is an absurd. Therefore,
			the measure $\mu_{w}$ is non atomic for all $w\in \Omega$ and this shows that $\sigma$ is random $\mu$-expansive.\end{example}

		In the following, we show a RDS with no random expansive measure.
		
		\begin{example}
			Denote by $\mathcal{F}$ the class of isometries $f:X \to X$, where $X$ is a separable metric space. Let $\theta:\Omega \to \Omega$ be any invertible function preserving an ergodic measure $\mathbb{P}$ on $\Omega$. 
			
			Consider the random dynamical system $f = \{f_w\}_{w}$. 
			We claim that there is no random expansive measure for $f$.  In fact, let us assume that there exists some Borel probability  $\mu$ on $\Omega \times X$ that is random expansive for $f$ with characteristic of expansivity $\delta=\delta(w)>0$. 
			
			Fixed $w\in \Omega$ and for all $x\in X$ we have $\Gamma_{\delta}(w,x) = B[x,\delta(w)]$,
			where  $B[x,\delta(w)]$ denotes the closed $\delta(w)$-ball around $x$.
			By the expansivity of $\mu$, for $\mathbb{P}$-a.e $w\in \Omega$ and all $x\in X$, we have
			\[
			\mu_w(\Gamma_{\delta}(w,x)) = \mu_{w}(B[x,\delta(w)])=0.
			\] 
			%\marginpar{countable  {\color{red}{measurable}} covering }
			Since X is separable, we can take a countable covering $\{C_{1,w},..., C_{n,w},...\}$ of $X$ by closed subsets such that for each $n$ there exists a point $x_n\in X$ satisfying  $C_{n,w}\subset B[x_n,\delta(w)]$. From this we obtain that for $\mathbb{P}$-a.e $w\in \Omega$ 
			\[
			\mu_w(X) = \mu_w \left( \bigcup_{n}^{+\infty} C_{n,w} \right) \leq \sum_{n=1}^{+\infty} \mu_w  (B[x_n,\delta(w)])=0
			\]
			which is a contradiction, as desired.
		\end{example}
		As an application of Theorem A, we also show a system with positive relative entropy.
		\begin{example}
			\label{exemploentropositiva1}
			Let $(\Omega, \mathcal{B}, m, \theta)$ be a measure  preserving a dynamical system with an invertible and ergodic map $\theta: \Omega \rightarrow \Omega$ and compact metric spaces $(\mathcal{E}_w, d_w)$, $w\in \Omega$, normalized in size by $diam_{d_w}(\mathcal{E}_w) \leq 1$. Let $f=\{ f_w\}$ be a RDS generated by continuous, open and  surjective  maps $f_w: \mathcal{E}_w \rightarrow \mathcal{E}_{\theta(w)}$   together with functions $\eta:\Omega \rightarrow \mathbb{R}_+$, $w\mapsto \eta_w$ and a real number $\xi > 0$ satisfying the following conditions:
			\begin{itemize}
				\item [(H1)] $f_w(B_w(y,\eta_w)) \supset B_{\theta(w)}(f_w(y),\xi)$ for every $(w,y)\in \mathcal{E}$
				\item [(H2)] There exists a measurable function $\gamma:\Omega\rightarrow (1, +\infty )$, $w\mapsto \gamma_w$ such that, for $m$-a.e. $w\in \Omega$,
				\[
				d_{\theta(w)}(f_w(y_1),f_w(y_2)) \geq \gamma_w d_w(y_1, y_2) \quad \text{whenever} \quad d_w(y_1,y_2)< \eta_w, \quad y_1, y_2 \in \mathcal{E}_w
				\]
				\item [(H3)] The map $w\mapsto deg(f_w):= \sup_{y\in \mathcal{E}_{\theta(w)}} \sharp f^{-1}_{w}(\{y\})$ is measurable. 
				\item [(H4)] $\inf_{w\in \Omega}\gamma_w > 1$, \quad $ \sup_{w\in \Omega} deg(f_w) < \infty$ \quad \text{and} \quad $\sup_{w\in \Omega} \eta_{\xi}(w) < \infty$.
			\end{itemize}
			Adding other technical hypotheses, the authors in \cite{Urbanski} showed the existence of equilibrium states for a class of H\"older potentials. In the particular case of the null potential, it was shown that $h_\mu(f) = \int_{\Omega} \log deg(f_w) dm(w) >0$, where $\mu \in \mathcal{M}(\E, f)  \subset \mathcal{P}_{m}(\E)$. Therefore, by the Theorem A, there exists some measure which is positively expansive in the random sense.
		\end{example}
		
		In the following, we give an example of a random continuum-wise expansive system. 
		\begin{example}
			Let $X$ be a metric space and $f_0, f_1 : X \to X$ maps such that $f_0$ is an isometric map and $f_1$ is a deterministic continuum-wise expansive homeomorphism with constant of expansivity $\delta>0$. Consider the set $\{0,1\}^{\mathbb{Z}}$ and let $\sigma$ be the shift map on $\{0,1\}^{\mathbb{Z}}$. Define the map
			\[
			F:\{0,1\}^{\mathbb{Z}} \times X \to \{0,1\}^{\mathbb{Z}}\times X   \   \   \     \   \    F(w,x) = (\sigma(w),f_{w_0}(x)).
			\]
			The $n$-th power of this map is given by:
			\[
			F^{n}(w,x) = (\sigma^n(w), f_{w_{n-1}}\circ ... \circ f_{w_0}(x)).
			\]
			Then $F$ is a random dynamical system. Let $\mathbb{P}$ be an ergodic measure such that
			\[
			\mathbb{P}(\{w\in \Omega,  \,\, \textrm{there is a finite number of symbols} \, w_i=1\})=0
			\]
			(or $\mathbb{P}(A)=0$ for all countable set $A \in \{0,1\}^{\mathbb{Z}}$). Note that, given $w\in \Omega$ and a continuum set $C \subset X$ that is not a singleton, there exists $n=n(w,C)$ such that $\text{diam}(f_{w}^{n}(C)) > \delta$.
		\end{example}

		\bibliographystyle{amsplain}

	\end{document}